\newcommand{\Keywords}[1]{\par\noindent{\small{\bf Keywords\/}: #1}}
\newcommand{\Class}[1]{\par\noindent{\small{\bf Mathematics Subjects Classification (2010)\/}: #1}}
\def\url@leostyle{%
 \@ifundefined{selectfont}{\def\UrlFont{\sf}}{\def\UrlFont{\scriptsize\ttfamily}}} \makeatother\urlstyle{leo}
\newtheorem{theorem}{Theorem}
\newtheorem{proposition}[theorem]{Proposition}
\newtheorem{lemma}[theorem]{Lemma}
\theoremstyle{definition}
\newtheorem{example}[theorem]{Example}
\theoremstyle{remark}
\newtheorem{remark}[theorem]{Remark}
\numberwithin{equation}{section}
\numberwithin{theorem}{section}
\definecolor{Red}{rgb}{0.9,0,0.0}
\definecolor{Blue}{rgb}{0,0.0,1.0}
\def\cB{\mathcal{B}}
\def\cC{\mathcal{C}}
\def\cF{\mathcal{F}}
\def\cG{\mathcal{G}}
\def\cO{\mathcal{O}}
\def\cP{\mathcal{P}}
\def\bE{\mathbb{E}}
\def\bF{\mathbb{F}}
\def\bG{\mathbb{G}}
\def\bP{\mathbb{P}}
\def\bR{\mathbb{R}}
\newcommand{\1}{\mathbbm{1}}            % preferable way of writing indicator function
\newcommand{\set}[1]{\{#1\}}            % set: {xyz} to be used for inline formulas
\newcommand{\norm}[1]{ \| #1 \| }       % mid bar with small spaces before and after: x | y
\newcommand{\abs}[1]{\left\vert#1\right\vert}   % absolute value
\newcommand{\wh}[1]{\widehat{#1}}
 \def\r{{\mathbb R}}
 \def\hat{\widehat}
\def\tilde{\widetilde}
 \def\F{{\cal F}}
\newtheorem{thm}{Theorem}[section]
\newtheorem{lem}{Lemma}[section]
\newtheorem{pro}{Proposition}[section]
\newtheorem{cor}{Corollary}[section]
\newtheorem{rem}{Remark}[section]
\newtheorem{ex}{Example}[section]
\newtheorem{defi}{Definition}[section]%{\bf }{\rm }
  \newcommand{\be}{\begin{equation}}
\newcommand{\ee}{\end{equation}}
\newcommand{\bde}{\begin{displaymath}}
\newcommand{\ede}{\end{displaymath}}
\newcommand{\beq}{\begin{eqnarray*}}
\newcommand{\eeq}{\end{eqnarray*}}
\newcommand{\beqa}{\begin{eqnarray}}
\newcommand{\eeqa}{\end{eqnarray}}
\newcommand{\bel }{\left{\begin{array}{ll}}}
\newcommand{\eel}{\cr \end{array} \right.}
\newcommand{\bd}{\begin{defi}}
\newcommand{\ed}{\end{defi}}
\newcommand{\brem }{\begin{rem} \rm }
\newcommand{\erem }{\end{rem}}
\newcommand{\bex}{\begin{ex} \rm }
\newcommand{\eex}{\end{ex}}
\newcommand{\begth}{\begin{thm}}
\newcommand{\eeth}{\end{thm}}
\newcommand{\bl}{\begin{lem}}
\newcommand{\el}{\end{lem}}
\newcommand{\bp}{\begin{pro}}
\newcommand{\ep}{\end{pro}}
\newcommand{\bcor}{\begin{cor}}
\newcommand{\ecor}{\end{cor}}
\newcommand{\lab }{\label }
 \def\ff{{\mathbb F}}
 \def\gg{{\mathbb G}}
\title{Semimartingales and Shrinkage of Filtration}
\author{Tomasz R. Bielecki
\\ Department of Applied Mathematics \\
 Illinois Institute of Technology \\
 Chicago, IL 60616, USA \\ \\
Jacek Jakubowski
\\  Institute     of Mathematics  \\ University of Warsaw
\\ Warszawa, Poland \\ \\
Monique Jeanblanc \\
LaMME, Univ Evry, \\
 Universit\'e Paris Saclay, Evry, France \\
 \\
Mariusz Niew\k{e}g\l owski \\
Faculty of Mathematics and Information Science
\\ Warsaw University of Technology
\\ 00-661 Warszawa, Poland}
\date{\vskip 30 pt \today \vskip 25 pt}
\begin{document}
\maketitle
\thispagestyle{empty}	

\begin{abstract}
We consider a complete probability space $(\Omega,\cF,\bP)$, which is endowed with two filtrations, $\bG$ and $\bF$, assumed to satisfy the usual conditions and such that $\bF \subset \bG$. On this probability space we consider a real valued $\bG$-semimartingale $X$. The results can be  generalized to the case of $\bR^n$ valued semimartingales, in a straightforward manner.

The purpose of this work is to study the following two problems:
\begin{itemize}
\item[A.] If $X$ is $\bF$-adapted, compute the $\bF$-semimartingale characteristics of $X$ in terms of the  $\bG$-semimartingale characteristics of $X$.
\item[B.] If $X$ is a special $\bG$-semimartingale but not $\bF$-adapted, compute the $\bF$-semimartingale characteristics of $\bF$-optional projection of $X$ in terms of the $\bG$-canonical decomposition and $\bG$-semimartingale characteristics of $X$.
\end{itemize}
In this paper problem B is solved under the assumption that the filtration $\bF$ is immersed in $\bG$. Beyond the obvious mathematical interest, our study is motivated by important practical applications in areas such as finance and insurance (cf.  \cite{BieJakNie2019}).

\vskip 20 pt
\Keywords{Semimartingale, special semimartingale, filtration shrinkage, semimartingale characteristics}
\vskip 20 pt
\Class{60G99,$\,$60H99}
\end{abstract}

%%%%%%%%%%%%%%%%%%%%%%%%%%%%%%%%%%%%%%%%%%%%%%%%%%%%%%%%%%%%%%%%%%%%%%%%%%%

%\tableofcontents
%\newpage
%%%%%%%%%%%%%%%%%%%%%%%%%%%%%%%%%%%%%%%%%%%%%%%%%%%%%%%%%%%%%%%%%%%%%%%%%%%%%%%%%%%%%%%%%
%%%%%%%%%%%%%%%%%%%%%%%%%%%%%%%%%%%%%%%%%%%%%%%%%%%%%%%%%%%%%%%%%%%%%%%%%%%%%%%%%%%%%%%%%
\section{Introduction}   \label{sec:intro}
%%%%%%%%%%%%%%%%%%%%%%%%%%%%%%%%%%%%%%%%%%%%%%%%%%%%%%%%%%%%%%%%%%%%%%%%%%%%%%%%%%%%%%%%%
%%%%%%%%%%%%%%%%%%%%%%%%%%%%%%%%%%%%%%%%%%%%%%%%%%%%%%%%%%%%%%%%%%%%%%%%%%%%%%%%%%%%%%%%%

This paper is meant to initiate a systematic study of the change of properties of semimartingales  under shrinkage of filtrations and, when appropriate,  under respective projections. The paper does not aim at a complete and comprehensive study of the topic. Nevertheless, our study contributes, we believe,  to understanding of these problems and to giving, in the some specific cases, explicit solutions.

We consider  a complete probability space $(\Omega,\cF,\bP)$, which is endowed with two filtrations, $\bG$ and $\bF$, assumed to satisfy the usual conditions and such that $\bF \subset \bG$. On this probability space we consider a real valued $\bG$-semimartingale $X$. The results can be  generalized to the case of $\bR^n$ valued semimartingales, in a straightforward manner. We fix a truncation function with respect to which the semimartingale characteristics are computed.

The purpose of this work is to study the following two problems:
\begin{itemize}
	\item[A.] If $X$ is $\bF$-adapted, compute the $\bF$-semimartingale characteristics of $X$ in terms of the  $\bG$-semimartingale characteristics of $X$.
	\item[B.] If $X$ is a special $\bG$-semimartingale but  not $\bF$-adapted,  compute the $\bF$-semimartingale characteristics of the $\bF$-optional projection of $X$ in terms of the $\bG$-canonical decomposition and  $\bG$-semimartingale characteristics of $X$.
\end{itemize}
Note that  $\bG$-semimartingale characteristics  of $X$ are unique up to equivalence, even though they may not determine the law of $X$ uniquely. Thus, the above two problems are well posed.

So, in a sense, we study problems, which are complementary to problems that arise when one studies what happens to a semimartingale under enlargement of filtration, where the main object of interest is study how martingales in a given filtration behave when they are considered in a larger filtration. The goal there is to give, under adequate conditions on the filtration enlargement,  their semimartingale decomposition in this larger filtration. The literature regarding enlargement of filtrations is quite abundant (see, e.g., the recent monograph \cite{AksJea2017} and the references therein). On the contrary, the literature regarding the shrinkage of filtration and its effect on the properties of a semimartingale is essentially non-existent. One can quote the seminal paper of Stricker \cite{Stricker1977}  who establishes that a $\gg$-semimartingale  which is $\ff$-adapted (with $\ff\subset \gg$) is an $\ff$-semimartingale, emphasizing that a $\gg$-local martingale  which is $\ff$-adapted may fail to be an $\ff$-local martingale. But the problem of how the semimartingale characteristics change   under shrinkage of the filtration is not addressed there.   Two notable exceptions are Chapter 4. \S 6 in \cite{LipShi1989} and Section IX.2 in \cite{Jac1979}, that feature partial versions of some of our results. Related study is also done in \cite{BreYor1978} where, however, a different, from our special semimartingales, class of processes was investigated (called {semi-martingales} there). Special cases of our Lemma \ref{lem:heroic-result} are present in the literature in the context of the filtering theory; see for example Lemma 8.4 in \cite{LipShi2001}.
It needs to be stressed that, in general, the problems that we study with regard to shrinkage of filtration are, in general, different from problems studied by the theory of filtering, where,  due to the noise in the observation, the observation filtration is not included in the signal filtration.

Also, contrary to the theory of the enlargement of the filtrations, where only initial and progressive enlargements are studied, here we do not make any specific restrictions regarding relation between the filtrations $\bG$ and $\bF$, except for the inclusion condition  $\bF \subset \bG$, and perhaps some additional conditions, such as the immersion condition in Section \ref{sec:not-adapted}.

An important motivation behind the study originated in this paper is coming from the theory of stochastic structures that has been under works in recent years (cf.  \cite{BieJakNie2019}). One of the problems arising in this theory can be summarized as follows: Suppose that $S=(S^1,\ldots,S^n)$ is a multivariate semimartingale. Suppose that $(B^i, C^i,\nu^i)$ are the semimartingale characteristics of the semimartingale $S^i$ in the natural filtration of $S$. The problem is to find the semimartingale characteristics of $S^i$ in the filtration of a sub-group of coordinates   $S^{i_1},\ldots,S^{i_k}$, $i_1,\ldots,i_k\in \set{1,\ldots,n},$ of $S$, in terms of $(B^i, C^i,\nu^i)$. Once it is understood how to do this, then one can proceed with construction of semimartingale structures, which, by definition, are multivariate semimartingales  whose components are semimartingales with predetermined marginal characteristics in their own filtrations. In a sense, this corresponds to what is being done in the realm of finite dimensional probability distributions via the classical copula theory, where a multivariate distribution is constructed with given margins. This allows for modeling dependence between components of a multivariate random variable, with preservation of the predetermined marginal distributions. Semimartingale structures find applications in areas such as finance and insurance. For example, in insurance, when designing claim policies for a group of claimants it is important to model dependence between multiple claim processes, subject to idiosyncratic statistical properties of these claim processes. In finance, semimartingale structures come in handy for traders who trade basket derivatives, as well as the individual constituents of these derivatives, and need to make sure that respective models for evolution of the basket price process and the price processes of individual constituents are calibrated in a consistent way. In this regard,  quite importantly, semimartingale structures allow for separation of estimating (calibrating) individual (idiosyncratic) characteristics of the components of the structure, from estimation (calibration) of the stochastic dependence between the components of the structure. We refer to \cite{BieJakNie2019} for more applications of stochastic structures.

The paper is organized as follows. In Section \ref{sec:prelim} we formulate the mathematical set-up for our study and we recall some useful concepts and results. In Section \ref{sec:adapted} we study problem A. In Section \ref{sec:not-adapted} we study problem B. In Section \ref{sec:examples} we provide several examples illustrating and complementing our theoretical developments. The complexity of the examples varies. But all of them are meant to illustrate our theoretical developments, even though results presented in some of the examples might possibly be obtained directly.

Finally, in Section \ref{sec:conclusion} we formulate some non-trivial open problems, solution of which will require more in-depth understanding of subject matters discussed in this paper.

%%%%%%%%%%%%%%%%%%%%%%%%%%%%%%%%%%%%%%%%%%%%%%%%%%%%%%%%%%%%%%%%%%%%%%%%%%%%%%%%%%%%%%%%%
%%%%%%%%%%%%%%%%%%%%%%%%%%%%%%%%%%%%%%%%%%%%%%%%%%%%%%%%%%%%%%%%%%%%%%%%%%%%%%%%%%%%%%%%%
\section{Preliminaries}   \label{sec:prelim}
%%%%%%%%%%%%%%%%%%%%%%%%%%%%%%%%%%%%%%%%%%%%%%%%%%%%%%%%%%%%%%%%%%%%%%%%%%%%%%%%%%%%%%%%%
%%%%%%%%%%%%%%%%%%%%%%%%%%%%%%%%%%%%%%%%%%%%%%%%%%%%%%%%%%%%%%%%%%%%%%%%%%%%%%%%%%%%%%%%%

We begin with recalling the concept of characteristics of a semimartingale. These characteristics depend on the choice of filtration and the choice of so called truncation function.  In what follows, we will use the standard truncation function $\chi(x) = x \1_{\abs{x} \leq 1}$.
Given the truncation function $\chi$,  the $\bG$-characteristic triple $(B^\bG, C^\bG, \nu^\bG)$ of a $\bG$-semimartingale $X$ is given in the following way. First
we define the process $X(\chi)$ by
\begin{equation}\label{eq:cutoff}
X_t(\chi) = X_t - X_0 - \sum_{ 0 < s \leq t} (\Delta X_s - \chi(\Delta X_s)), \quad t \geq 0.
\end{equation}
Since
$X(\chi)$ has bounded jumps it is a special $\bG$-semimartingale. Thus, it admits a unique canonical decomposition
\begin{equation}\label{eq:XGspecial}
X(\chi) = B^\bG +  M^{\bG},
\end{equation}
where $M^\bG$ is a $\bG$-local martingale such that  $M^\bG_0 = 0$, and $B^\bG$ is a $\bG$-predictable process with finite variation and $B^\bG_0=0$. The process $B^\bG$ is called the first characteristic of $X$, and this is the only characteristic that depends on the truncation function.

The $\bG$-local martingale $M^\bG$ can be decomposed uniquely  into the sum of two orthogonal martingales $M^\bG = M^{c,\bG} + M^{d, \bG}$, where $M^{c,\bG}$ is a continuous $\bG$-local martingale and $M^{d,\bG}$ is a purely discontinuous $\bG$-local martingale.
It can be shown that $M^{c,\bG}$ does not depend on the choice of  truncation function and it is called the continuous $\bG$-martingale part of $X$ and is denoted by $X^{c,\bG}$. Then, the second characteristic of $X$ is defined as $C^\bG = \langle X^{c,\bG} \rangle$, where $\langle   X^{c,\bG} \rangle$  is the predictable  quadratic variation process of $X^{c,\bG}$. Finally, the third characteristic of $X$ is denoted as $\nu^{\bG}$ and is defined as the $\bG$-predictable measure which is the $\bG$-compensator of $\mu$  -  the jump measure of $X$ as defined in Proposition II.1.16 in \cite{JacShi2002}. Clearly, $\nu^\bG$ does not depend on a choice of truncation function. It is clear that, given a truncation function, the $\bG$-characteristic triple $(B^\bG, C^\bG, \nu^\bG)$ is unique (up to equivalence).

In view of  Proposition II.2.9 in \cite{JacShi2002}, there exists a $\bG$-predictable, locally integrable increasing process, say $A^\bG$, such that
\begin{equation}\label{data}
B^\bG=b^\bG \!\cdot \! A^\bG ,  \qquad C^\bG=c^\bG\!\cdot \! A^\bG ,  \qquad \nu^\bG(dt,dx)=  K^\bG _t(dx) dA^\bG_t,
\end{equation}
where
\begin{itemize}
	\item[i.] $b^\bG$ is an $\bR$-valued and $\bG$-predictable process,
	\item[ii.] $c^\bG$ is an $\bR_+$-valued and $\bG$-predictable process,
	%, and
	\item[iii.] $K^\bG_t(\omega,dx )$ is a transition kernel from $(\Omega \times \bR_+, {\mathcal P}_\bG)$ to $(\bR,\cB(\bR))$, satisfying condition analogous to condition II.2.11 in \cite{JacShi2002}, and where ${\mathcal P}_\bG$ is the $\bG$-predictable $\sigma$ -field on $\Omega \times \bR_+$,
\end{itemize}
and where $\cdot$ denotes the stochastic or Stieltjes integral, wherever appropriate.

We will assume that
\begin{equation}\label{eq:ass-AG}
A^{\bG}_t=\int_0^t a^{\bG}_udu,
\end{equation}
where $a^{\bG}$ is a $\bG$-progressively measurable process.  This assumption will be satisfied in examples studied in Section \ref{sec:examples}.

In what follows we use the following notions and notation:

\begin{enumerate}
	\item  For a given process $Z$, we denote by $^{o,\bF}Z$ the optional projection of $Z$ on $\bF$ defined in the sense of He et al. \cite{HeWanYan1992}, i.e.,
	the unique $\bF$-optional, finite valued  process such that for every $\bF$-stopping time $\tau$  we have
	\[
	\bE ( Z_\tau \1_{\tau < \infty} | \cF_\tau) =\, ^{o,\bF}\!Z_\tau \1_{\tau < \infty}.
	\]
	Note that by Theorem 5.1 in \cite{HeWanYan1992} this optional projection exists if $Z$ is a measurable process such that $Z_\tau\1_{\tau < \infty}$ is  $\sigma$-integrable with respect to $\cF_\tau $ for every $\bF$-stopping time $\tau$. That is, there exists  a sequence  of sets $(A_n)_{n=1}^\infty$ such that $ A_n  \in \cF_\tau $, $A_n \uparrow \Omega$ and $\bE(Z_\tau\1_{\tau < \infty}\1_{A_n})<\infty$ for $n=1,2,\ldots $.
	
	\item  For a given process $Z$, we denote by $^{p,\bF}Z$ the predictable projection of $Z$ on $\bF$ defined in the sense of He et al. \cite{HeWanYan1992}, i.e.,
	the unique $\bF$-predictable, finite valued  process such that for every $\bF$-predictable stopping time $\tau$  we have
	\[
	\bE ( Z_\tau \1_{\tau < \infty} | \cF_{\tau-}) =\, {^{p,\bF}\!Z}_\tau \1_{\tau < \infty}.
	\]
	Note that by Theorem 5.2 in \cite{HeWanYan1992} this predictable projection exists if $Z$ is a measurable process such that $Z_\tau\1_{\tau < \infty}$ is  $\sigma$-integrable with respect to $\cF_{\tau-} $ for every predictable $\bF$-stopping time $\tau$. That is, there exists  a sequence of sets $(A_n)_{n=1}^\infty$ such that $ A_n  \in \cF_{\tau-} $, $A_n \uparrow \Omega$ and $\bE(Z_\tau\1_{\tau < \infty}\1_{A_n})<\infty$ for $n=1,2,\ldots $.
	
	\item We will also need a notion  of $\bF$-optional and $\bF$-predictable projections for any function $W: \tilde{\Omega} \rightarrow \bR$, which is  measurable with respect to $\tilde{\cF}$, 	where
	\[	
	\tilde{\Omega} := \Omega \times \bR_+ \times \bR, \quad
	\tilde{\cF} := \cF \otimes \cB(\bR_+) \otimes \cB(\bR)\,.
	\]
	The $\bF$-optional projection  of such a function $W$ is defined as the jointly measurable function  $^{o,\bF}W$  on $\Omega \times \bR_+ \times  \bR$,
	which is such that for all $x \in  \bR$ the process $^{o,\bF}W(\cdot,x)$ is the optional projection on $\bF$ of the process $W(\cdot,x)$. Similarly, the $\bF$-predictable projection  of such a function $W$ is defined as the function $^{p, \bF}W$  on $\Omega \times \bR_+ \times  \bR$, which is such that for all $x \in  \bR$ the process $^{p,\bF}W(\cdot,x)$ is the predictable projection on $\bF$ of the process $W(\cdot,x)$.
	
	\item We denote by $\cO_\bF$ (resp. $\cP_\bF$), the $\bF$-optional (resp. the $\bF$-predictable) sigma-field on $\Omega \times \bR_+$ generated by $\bF$-adapted c\`adl\`ag (resp. continuous)  processes. Analogously we introduce the sigma fields $\tilde{\cO}_\bF$ and $\tilde{\cP}_\bF$  on  $\tilde{\Omega}$
	defined by
	\[
	\tilde{\cO}_\bF : = \cO_\bF \otimes \cB(\bR), \qquad
	\tilde{\cP}_\bF : = \cP_\bF \otimes \cB(\bR).
	\]
	
	\item A random measure $\pi$ on $\cB(\bR_+) \otimes \cB(\bR)$ is $\bF$-optional (resp. $\bF$-predictable)  if  for any $\tilde{\cO}_\bF $-measurable (resp. $\tilde{\cP}_\bF$-measurable)  positive real function $W$, the real valued process
	\[
	V (\omega, t) := \int_{[0,t] \times \bR } W(\omega, s, x ) \pi(\omega; ds, dx )
	\] is $\bF$-optional (resp. $\bF$-predictable);
	equivalently if for any positive real, measurable function $W$  on $\widetilde \Omega$
	\[
	\bE \bigg(\int_{\bR_+ \times \bR }W( s, x ) \pi(ds, dx )\bigg)
	=
	\bE \bigg(\int_{\bR_+ \times \bR } {^{q,\bF}}W( s, x ) \pi(ds, dx )\bigg),
	\]
	where $q = o$ (resp. $q=p$).

	\item We say that a random measure $\pi$ on $\cB(\bR_+) \otimes \cB(\bR)$ is $\bF$-optionally (resp. $\bF$-predictably), $\sigma$-integrable if the measure $M_\pi $ on $\tilde{\cF}$ defined by
	\begin{equation}\label{mpi}
	M_\pi (\tilde{B}) := \bE \bigg( \int_{\bR_+ \times \bR } \1_{\tilde{B} }(\omega, t,x) \pi(\omega, dt, dx) \bigg), \qquad \tilde{B} \in \tilde{\cF},
	\end{equation}
	restricted to $\tilde{\cO}_\bF$ (resp. to $\tilde{\cP}_\bF$),  is a $\sigma$-finite measure. In other words $\pi$ is $\bF$-optionally, resp. $\bF$-predictable, $\sigma$-integrable if there exist a sequence of sets {$ (\tilde{A}_k)_{k=1}^\infty$  such that $\tilde{A}_k \in \tilde{\cO}_\bF$ (resp. $ \tilde{A}_k \in \tilde{\cP}_\bF$),     with $M_\pi(\tilde{A}_k) < \infty$ for each $k$ and $\tilde{A}_k \uparrow \tilde{\Omega}$.}

	\item  For a random measure $\pi$ on $\cB(\bR_+) \otimes \cB(\bR)$ we denote by $\pi^{o,\bF}$ the $\bF$-dual optional projection of $\pi$ on $\bF$, i.e., the unique $\bF$-optional  measure on $\cB(\bR_+) \otimes \cB(\bR)$ such that   it is $\bF$-optionally $\sigma$-integrable  and for every positive $\tilde{\cO}_\bF$-measurable function $W$ on $\widetilde \Omega$, we have
	\[
	\bE \Big( \int_{\bR_+ \times \bR } W (t,x) \pi(dt,dx) \Big) = \bE \Big( \int_{\bR_+ \times \bR } W (t,x) \pi^{o,\bF} (dt,dx) \Big).
	\]
	The $\bF$-dual predictable projection of $\pi$ on $\bF$, denoted by $\pi^{p,\bF}$, is defined analogously,  as the unique $\bF$-predictable  measure on $\cB(\bR_+) \otimes \cB(\bR)$ such that
	it is $\bF$-predictably $\sigma$-integrable  and for every positive $\tilde{\cP}_\bF$-measurable function $W$ on $\widetilde \Omega$, we have
	\[
	\bE \Big( \int_{\bR_+ \times \bR } W (t,x) \pi(dt,dx) \Big) = \bE \Big( \int_{\bR_+ \times \bR } W (t,x) \pi^{p,\bF} (dt,dx) \Big).
	\]
	We note that existence  and uniqueness of $\pi^{o,\bF}$ (resp. $\pi^{p,\bF} (dt,dx)$) holds under assumption that $\pi$ is $\bF$-optionally (resp. $\bF$-predictably), $\sigma$-integrable (see e.g. \cite[Theorem 11.8]{HeWanYan1992}).
	
	\item For any process $A$ and any (stopping) time $\vartheta$, we denote by $A^\vartheta$ the process $A$ stopped at $\vartheta$.
	
	\item We use the standard notation $[\cdot,\cdot]$ (resp. $[\cdot]$) for the quadratic co-variation (resp. variation) of  real-valued semimartingales. We denote by $\langle\cdot,\cdot \rangle $ (resp. $\langle \cdot\rangle $) the $\mathbb{F}$-predictable quadratic co-variation (resp. variation) of  real-valued semimartingales. We denote by  $\langle\cdot,\cdot \rangle^{\mathbb{G}} $ (resp. $\langle \cdot\rangle^{\mathbb{G}} $) the $\mathbb{G}$-predictable quadratic co-variation (resp. variation) of real-valued semimartingales.
	
	\item We use the usual convention that $\int_0^t = \int_{(0,t]}$, for any $t\geq 0$.
	
\end{enumerate}

In the rest of the paper we shall study the $\bF$-characteristics of $X$ in the case when $X$ is $\bF$-adapted, and the $\bF$-characteristics of the optional projection of $X$ on $\bF$ in the case when $X$ is not $\bF$-adapted, providing conditions for such optional projection to exist.

%%%%%%%%%%%%%%%%%%%%%%%%%%%%%%%%%%%%%%%%%%%%%%%%%%%%%%%%%%%%%%%%%%%%%%%%%%%%%%%%%%%%%%%%%%%%%%
%%%%%%%%%%%%%%%%%%%%%%%%%%%%%%%%%%%%%%%%%%%%%%%%%%%%%%%%%%%%%%%%%%%%%%%%%%%%%%%%%%%%%%%%%%%%%%
\section{Study of Problem A: The Case of $X$ adapted to $\bF$}   \label{sec:adapted}
%%%%%%%%%%%%%%%%%%%%%%%%%%%%%%%%%%%%%%%%%%%%%%%%%%%%%%%%%%%%%%%%%%%%%%%%%%%%%%%%%%%%%%%%%%%%%%
%%%%%%%%%%%%%%%%%%%%%%%%%%%%%%%%%%%%%%%%%%%%%%%%%%%%%%%%%%%%%%%%%%%%%%%%%%%%%%%%%%%%%%%%%%%%%%

In this section, we consider the case where  $X$  is a $\bG$-semimartingale, which is $\bF$-adapted. Thus, it is an $\bF$-semimartingale (see \cite[Theorem  3.1]{Stricker1977}).

We start with a result regarding  semimartingales with deterministic $\bG$-characteristics, that is, semimartingales with independent increments (cf. \cite[Theorem II.4.15]{JacShi2002}).
\begin{proposition}\label{prop:det-char}
	If $X$ has deterministic $\bG$-characteristics, then they are also $\bF$-characteristics of $X$.
\end{proposition}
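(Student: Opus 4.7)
The plan is to leverage the equivalence, due to Jacod and Shiryaev (Theorem~II.4.15 of \cite{JacShi2002}), between being a semimartingale with deterministic characteristics and being a process with independent increments (PII), and then exploit the fact that the PII property is inherited by any subfiltration.

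First I would invoke the cited equivalence with respect to $\bG$: the determinism of the $\bG$-characteristics is equivalent to $X_t - X_s$ being independent of $\cG_s$ for all $0 \le s \le t$. Since $\cF_s \subset \cG_s$, the same increment is independent of $\cF_s$, so $X$ is also a PII with respect to $\bF$. Because $X$ is $\bF$-adapted by assumption and hence an $\bF$-semimartingale by Stricker's theorem \cite{Stricker1977}, a second application of the same equivalence, now relative to $\bF$, yields that the $\bF$-characteristics of $X$ are themselves deterministic.

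To match the two deterministic triples, I would appeal to the explicit L\'evy--Khintchine-type representation of the characteristic function of a PII-semimartingale,
\[
\bE\!\left[e^{iu(X_t-X_s)}\right] = \exp\!\left\{iu(B_t - B_s) - \tfrac{u^2}{2}(C_t - C_s) + \int_{(s,t]\times\bR}\!\bigl(e^{iux}-1-iu\chi(x)\bigr)\,\nu(dr, dx)\right\},
\]
which holds both with $(B,C,\nu) = (B^\bG,C^\bG,\nu^\bG)$ and with $(B,C,\nu) = (B^\bF,C^\bF,\nu^\bF)$. The left-hand side is determined by the law of $X$ under $\bP$ alone and therefore does not depend on the choice of filtration, so the two right-hand sides must coincide. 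Standard Fourier uniqueness for infinitely divisible laws, applied for all $0 \le s \le t$ and all $u \in \bR$, then forces $(B^\bG,C^\bG,\nu^\bG) = (B^\bF,C^\bF,\nu^\bF)$.

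The main obstacle, as I see it, lies in the last uniqueness step: extracting the full deterministic measures $B$, $C$, and $\nu$ (on $(0,\infty)$ and on $(0,\infty)\times\bR$, respectively) from the agreement of the characteristic functions over all pairs $s \le t$ and all $u \in \bR$. This is handled by first using uniqueness of the L\'evy triplet of each single increment $X_t - X_s$ to obtain equality of the increment triplets $B_t - B_s$, $C_t - C_s$, and $\nu|_{(s,t] \times \bR}$, and then running a monotone class argument in the time variable to lift this pointwise-in-$(s,t)$ equality to equality of the underlying measures themselves.
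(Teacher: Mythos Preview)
Your argument is correct, but it follows a genuinely different route from the paper's. The paper does not pass through the PII characterisation at all; instead it invokes the martingale characterisation of semimartingale characteristics (Theorem~II.2.21 in \cite{JacShi2002}). Concretely, it takes the three $\bG$-local martingales
\[
M^\bG = X(\chi) - B^\bG,\qquad (M^\bG)^2 - C^\bG - \chi^2\!*\!\nu^\bG,\qquad g*\mu - g*\nu^\bG\ \ (g\in\cC^+(\bR)),
\]
observes that determinism of $(B^\bG,C^\bG,\nu^\bG)$ makes all three $\bF$-adapted, and then checks directly---using Stricker's theorem for the first, and the finiteness of the deterministic integrators $C^\bG+\chi^2*\nu^\bG$ and $g*\nu^\bG$ on $[0,T]$ for the second and third---that they are $\bF$-(local) martingales. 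The same theorem then reads off $(B^\bG,C^\bG,\nu^\bG)$ as the $\bF$-characteristics.

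Your approach is cleaner conceptually: the step ``independence from $\cG_s$ implies independence from $\cF_s$'' is one line, and the matching of triples via L\'evy--Khintchine uniqueness is classical. It does, however, tacitly use the standing hypothesis \eqref{eq:ass-AG} (so that there are no fixed times of discontinuity and your displayed exponential formula holds as written); you may want to flag that explicitly. The paper's approach, by contrast, stays entirely within the local-martingale machinery used elsewhere in Section~\ref{sec:adapted}, and its localisation arguments, while heavier, would transplant more readily to situations where an independent-increments characterisation is not available.
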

\begin{proof}
	
	In the proof we will use the standard notation for integral of $g$ with respect to a measure $\gamma$, that is
	\[
	(g*\gamma)_t : = \int_0^t \int_\r g(x)  \gamma(ds,dx ), \quad t \geq 0.\]
	Note that $X(\chi)$ given in \eqref{eq:cutoff} is an $\bF$-adapted process with jumps bounded by $1$.
	From assumption \eqref{eq:ass-AG} it follows that the process $B^\bG $ does not have jumps. Given this, from \cite[Theorem II.2.21]{JacShi2002} we know that the processes
	\begin{equation}\label{eq:locmart-char}
	M^\bG = X(\chi) - B^\bG ; \quad Y = (M^\bG)^2 - C^\bG - \chi^2(x) * \nu^\bG ;
	\quad
	g*\mu - g * \nu^{\bG}, \quad g \in \cC^+(\bR),
	\end{equation}
	are $\bG$-local martingales, where the class $\cC^+(\bR)$ of functions is defined in \cite[II.2.20]{JacShi2002}. These processes are $\bF$-adapted processes since the $\bG$ characteristics of $X$ are assumed to be deterministic.
	Thus, by  \cite[Theorem 3.1]{Stricker1977}, $M^\bG$ is an $\bF$-semimartingale.
	Moreover, since $M^\bG$ has bounded jumps, it is a special $\bF$-semimartingale. So, by  \cite[Theorem 2.6]{Stricker1977}  it is an $\bF$-local martingale.
	In order to analyze the $\bG$--local martingale $Y$ in \eqref{eq:locmart-char} let us take a localizing sequence of $\bG$-stopping times $(\tau_n)_{n=1}^\infty$. Then we have
	\[
	\bE	(M^\bG)^2_{\tau_n \wedge \sigma} = \bE( C^\bG + \chi^2 * \nu^\bG)_{\tau_n \wedge \sigma}, \quad n=1,2,\ldots ,
	\]
	for every bounded  $\bG$-stopping time $\sigma$. This implies that
	\[
	\bE( C^\bG + \chi^2 * \nu^\bG)_{\tau_n \wedge \sigma}  \leq \bE( C^\bG + \chi^2* \nu^\bG)_{T}
	=
	C^\bG_T + (\chi^2 * \nu^\bG)_{T} < \infty,
	\]
	since the  $\bG$-characteristics of $X$ are deterministic. So, upon letting $n \rightarrow \infty$, we obtain
	\[
	\bE\Big(	(M^\bG)^2_{\sigma} - C^\bG_\sigma - (\chi^2 * \nu^\bG)_{\sigma} \Big) = 0.
	\]
	Since $\bF \subset \bG$ the above holds for all bounded $\bF$-stopping times. Thus $(M^\bG)^2 - C^\bG - \chi^2* \nu^\bG$ is an $\bF$-martingale.
	
	Similar reasoning can be used to show that the third $\bG$--local martingale in \ref{eq:locmart-char} is
	an $\bF$-martingale for every $g \in \cC^+(\bR)$. Indeed, let is fix $g \in \cC^+(\bR)$.
	Then there exist a localizing sequence of $\bG$-stopping times $(\tau_n)_{n=1}^\infty$ such that
	\[
	\bE (g*\mu^X)_{\sigma \wedge \tau_n} = \bE(g * \nu^{\bG})_{\sigma \wedge \tau_n}, \quad n=1,2,\ldots ,	
	\]
	for an arbitrary bounded $\bG$-stopping time $\sigma$. By letting $n \rightarrow \infty$ we obtain
	\[
	\bE (g*\mu)_{\sigma } = \bE(g * \nu^{\bG})_{\sigma } \leq (g * \nu^{\bG})_{T} < \infty.
	\]
	This implies that $g*\mu^X - g * \nu^{\bG}$ is an $\bF$-martingale.	
	Consequently all $\bG$--local martingales defined in \eqref{eq:locmart-char} are $\bF$--local martingales.
	Using again  \cite[Theorem II.2.21]{JacShi2002} we finish the proof.
	
\end{proof}

\noindent
We now proceed to  consider a general case of $\bG$-characteristics of $X$. Towards this end, we make the following assumptions:

A1. For every $t \geq 0$  we have
\[
\bE  \big( \int_0^t | b^\bG_u | a^\bG_u  du \big)  < \infty,
\]
where $a^\bG\geq 0$ is defined in \ref{eq:ass-AG}.

A2. The process $b^\bG a^\bG$ admits an $\bF$-optional projection.

A3. The process $M^\bG$ defined in \eqref{eq:XGspecial} is a true $\bG$-martingale.

\begin{remark}	
	In view of assumption A3, {the} process $^{o,\bF} M^\bG$ is a true $\bF$-martingale as well. If the process $M^\bG$ were a  $\bG$-local-martingale but not a true  $\bG$-martingale, then $^{o,\bF} M^\bG$  might not necessarily be an  $\bF$-local-martingale. See \cite[section 2]{Stricker1977} and  \cite{FolProt2011}.
\end{remark}

\medskip
\noindent
We will need the following two technical results.
\begin{lemma}\label{lem:pre-heroic-result}
	Suppose that $A$  is an $\bF$-adapted process with prelocally integrable variation, $H$ is a process admitting an $\bF$-optional projection, and such that $H\cdot A$ has an integrable variation. Then
	\[
	M = ^{o, \bF}\!\!(H\!\cdot\!A) - (^{o, \bF}\!H)\!\cdot\!A
	\]
	is a uniformly integrable $\bF$-martingale.
\end{lemma}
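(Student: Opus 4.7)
The goal is to show that $M$ is a uniformly integrable $\bF$-martingale with $M_0=0$. The basic facts first: ${}^{o,\bF}(H\cdot A)$ is $\bF$-optional by construction, and $({}^{o,\bF}H)\cdot A$ is the Stieltjes integral of an $\bF$-optional process against an $\bF$-adapted finite variation process, hence $\bF$-optional; thus $M$ is $\bF$-optional. Also $M_0=0$, since both summands vanish at $0$. The integrability of $M_\infty$ will follow from the key identity below, together with $|{}^{o,\bF}H|\le{}^{o,\bF}|H|$ and the hypothesis that $H\cdot A$ has integrable variation.

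The heart of the argument is the Fubini-type duality: for every bounded $\bF$-optional process $Y$,
\begin{equation}\label{eq:plan:key}
\bE\Big[\int_0^\infty Y_u H_u\, dA_u\Big]=\bE\Big[\int_0^\infty Y_u\,({}^{o,\bF}H)_u\, dA_u\Big].
\end{equation}
This is the classical identity expressing that, against the $\bF$-optional random measure $Y_u\,dA_u$ (with $Y$ $\bF$-optional and $A$ $\bF$-adapted), a measurable integrand $H$ may be replaced by its $\bF$-optional projection. It is established by a monotone class argument starting from the defining relation $\bE[H_\tau\1_{\tau<\infty}]=\bE[({}^{o,\bF}H)_\tau\1_{\tau<\infty}]$ at $\bF$-stopping times $\tau$ (cf.\ \cite{HeWanYan1992}); for bounded $Y$, the hypothesis that $H\cdot A$ has integrable variation gives $\bE\big[\int|Y H|\,d|A|\big]\le\|Y\|_\infty\bE\big[\int|H|\,d|A|\big]<\infty$, supplying the $\sigma$-integrability needed to run the argument.

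Given \eqref{eq:plan:key}, the martingale property is immediate. For $s\le t$ and $F\in\cF_s$, the defining property of the $\bF$-optional projection at the deterministic time $t$ yields $\bE[{}^{o,\bF}(H\cdot A)_t\1_F]=\bE[(H\cdot A)_t\1_F]$, and analogously at time $s$; subtracting, and applying \eqref{eq:plan:key} with the $\bF$-predictable (hence $\bF$-optional) process $Y_u:=\1_F\1_{(s,t]}(u)$, yields $\bE[(M_t-M_s)\1_F]=0$. Hence $M$ is an $\bF$-martingale.

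For uniform integrability, I would apply \eqref{eq:plan:key} with $Y_u:=\1_F\1_{(t,\infty)}(u)$, $F\in\cF_t$, to check $\bE[M_\infty\mid\cF_t]=M_t$, so that $M$ is closed by $M_\infty\in L^1$ (the integrability of $M_\infty$ coming from \eqref{eq:plan:key} applied to $|H|$ together with the integrable variation of $H\cdot A$). The main obstacle is the careful verification of \eqref{eq:plan:key} under the precise hypotheses of the lemma---in particular, that $A$ itself is only prelocally integrable and only the weighted process $H\cdot A$ is assumed of integrable variation---which may require a preliminary localization reducing to the case of $A$ with integrable variation before invoking the classical duality.
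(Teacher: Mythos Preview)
Your argument is correct; in substance it is the same as the paper's, only unpacked. The paper's proof is two lines: it applies \cite[Corollary~5.31(2)]{HeWanYan1992} to $V:=H\cdot A$ (which has integrable variation by hypothesis) to conclude that ${}^{o,\bF}V - V^{o,\bF}$ is a uniformly integrable $\bF$-martingale, where $V^{o,\bF}$ denotes the $\bF$-\emph{dual} optional projection of $V$; it then invokes \cite[Theorem~5.25]{HeWanYan1992} and the remark following it to identify $(H\cdot A)^{o,\bF}=({}^{o,\bF}H)\cdot A$, using that $A$ is $\bF$-adapted.

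Your route reproduces exactly these two ingredients without naming the dual optional projection. Your displayed duality identity is precisely the characterizing property that makes $({}^{o,\bF}H)\cdot A$ the $\bF$-dual optional projection of $H\cdot A$---that is, the content of Theorem~5.25 in this setting---and your direct verification of $\bE[(M_t-M_s)\1_F]=0$ together with the closure at infinity is the computation underlying Corollary~5.31(2). So there is no genuinely different idea, and no gap. Two minor remarks: the localization you flag at the end is not actually needed, since the integrable variation of $H\cdot A$ alone (together with $|{}^{o,\bF}H|\le{}^{o,\bF}|H|$ and the duality for nonnegative integrands against the $\bF$-optional measure $d|A|$) already gives all the required integrability; and the c\`adl\`ag regularity of $M$, which your sketch does not address, comes for free from the packaged statement of Corollary~5.31(2).
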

\begin{proof}
	Applying  \cite[Corollary 5.31.(2)]{HeWanYan1992} to the  process $H\cdot A$,  we conclude that the process
	\[
	M = ^{o, \bF}\!\!(H\!\cdot\!A) - (H\!\cdot\!A)^{o, \bF}
	\]
	is a uniformly integrable martingale. 	Now, by  \cite[Theorem 5.25]{HeWanYan1992} and the remark following this theorem, we have
	\[
	(H \cdot A )^{o, \bF} = (^{o, \bF}\!H) \cdot A \, ,
	\]
	which finishes the proof.
\end{proof}			

\begin{lemma}\label{lem:heroic-result}
	Let A1 and A2 be satisfied. Then, $^{o,\bF} B^\bG$ and $\int_0^\cdot \,  {^{o,\bF}} (b^\bG a^\bG)_udu$ exist
	and
	the process $M^B$ given as
	\begin{equation}\label{eq:martBG}
	M^B_t={^{o,\bF} B^\bG_t}- \int_0^t\,  {^{o,\bF}} (b^\bG a^\bG)_udu, \qquad t \geq  0,
	\end{equation}
	is an  $\bF$-martingale. Moreover, if  $\bF$ is $\bP$-immersed in $\bG$,\footnote{ We recall that $\bF$ is $\bP$-immersed in $\bG$ if any $(\bF,\bP)$-martingale is a $(\bG,\bP)$-martingale.} then $M^B$ is a null process.
\end{lemma}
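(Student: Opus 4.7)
The plan is to derive both assertions from Lemma~\ref{lem:pre-heroic-result}, applied with the deterministic, $\bF$-adapted, continuous process $A_t = t$ and the integrand $H = b^\bG a^\bG$. With this choice, by \eqref{data} and \eqref{eq:ass-AG} we have $H \cdot A = \int_0^\cdot b^\bG_u a^\bG_u\,du = B^\bG$, while $({^{o,\bF}}H) \cdot A = \int_0^\cdot {^{o,\bF}}(b^\bG a^\bG)_u\,du$. Assumption A2 guarantees that $^{o,\bF}(b^\bG a^\bG)$ exists, and A1 combined with the contractive property of optional projection in $L^1$ then implies that the associated Lebesgue integral is well defined, $\bF$-adapted, and of integrable variation on each bounded interval. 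Assumption A1 also furnishes the $\sigma$-integrability of $B^\bG_t$ with respect to $\cF_t$ needed for $^{o,\bF}B^\bG$ to exist.

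Because Lemma~\ref{lem:pre-heroic-result} is formulated for processes of globally integrable variation while A1 only supplies this on each bounded interval, I would first work on $[0,T]$ by replacing $A$ with $A^T$. The lemma then yields that $(M^B)^T$ is a uniformly integrable $\bF$-martingale; since $T>0$ is arbitrary and optional projection commutes with deterministic stopping, $M^B$ itself is an $\bF$-martingale.

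For the ``moreover'' assertion, the plan is to show by direct computation that $^{o,\bF}B^\bG$ and $\int_0^\cdot {^{o,\bF}}(b^\bG a^\bG)_u\,du$ coincide under immersion. The key input is the standard characterization of $\bP$-immersion: $\cG_u$ and $\cF_t$ are conditionally independent given $\cF_u$ whenever $u \leq t$. Since $b^\bG_u a^\bG_u$ is $\cG_u$-measurable, this gives $\bE(b^\bG_u a^\bG_u \mid \cF_t) = \bE(b^\bG_u a^\bG_u \mid \cF_u)$ for a.e.\ $u \in [0,t]$, so that, by Fubini,
\[
{^{o,\bF}}B^\bG_t \;=\; \bE\!\left(\int_0^t b^\bG_u a^\bG_u\,du \,\Big|\, \cF_t\right) \;=\; \int_0^t \bE(b^\bG_u a^\bG_u \mid \cF_u)\,du \;=\; \int_0^t {^{o,\bF}}(b^\bG a^\bG)_u\,du,
\]
and hence $M^B \equiv 0$ up to indistinguishability.

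The main obstacles I expect are bookkeeping in nature: identifying $^{o,\bF}B^\bG_t$ with the random variable $\bE(B^\bG_t \mid \cF_t)$ up to an evanescent set, justifying the interchange of conditional expectation and Lebesgue integral in both the martingale step and the immersion step, and deriving the conditional independence characterization of $\bP$-immersion in the form needed to apply it to the $\cG_u$-measurable integrand $b^\bG_u a^\bG_u$ for each fixed $u \leq t$.
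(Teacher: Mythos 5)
Your proposal is correct and follows essentially the same route as the paper: both localize to $[0,T]$ so that Lemma~\ref{lem:pre-heroic-result} applies with $A_t=t$ (the paper truncates the integrand $H$ to $H\1_{[0,T]}$, you truncate $A$ to $A^T$, which is the same maneuver), identify the resulting uniformly integrable martingale with $M^B$ on $[0,T]$ via commutation of optional projection with stopping, and let $T$ be arbitrary; for the immersion claim the paper's one-line appeal to immersion is exactly your conditional-independence-plus-Fubini computation spelled out. No gaps.
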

\begin{proof}
	Since, by assumption A1, the process $B^\bG_t = \int_0^t H_s ds$, where $	H_s = a^\bG_s b^\bG_s $, is prelocally integrable and, by assumption A2, $H$  has an optional projection, we may apply  \cite[Theorem 5.25]{HeWanYan1992} and conclude
	that  $^{o,\bF} B^\bG$ and $\int_0^\cdot \,  {^{o,\bF}} (b^\bG a^\bG)_udu$ exist.
	
	Now, fix $T>0$ and let
	\[
	L_t = a^\bG_t b^\bG_t \1_{\set{ t \leq T}}, \quad t\geq 0.
	\]
	Then, invoking A1, A2 and applying Lemma \ref{lem:pre-heroic-result} with $A_t=t$ we conclude that
	$^{o,\bF} ( L\!\cdot\! A)$ and $ {^{o,\bF}}L\!\cdot\! A $ exist  and the process
	\begin{equation}\label{eq:martofL.A}
	N_t :=	\,^{o,\bF}\!( L\!\cdot\!A)_t - (
	{^{o,\bF}}L\!\cdot\!A )_t, \quad t \geq 0,
	\end{equation}
	is a uniformly integrable  $\bF$-martingale.
	Now note that $L\!\cdot\!A = (L\!\cdot\!A)^T = (H\!\cdot\!A)^T$ so, by \cite[Theorem 5.7]{HeWanYan1992}, we have for $t\in [0,T]$
	\begin{equation}\label{eq:ofL.A1}
	^{o,\bF}\! ( L\!\cdot\!A)_t =\,
	^{o,\bF}\! ( (H\!\cdot\!A)^T )_t =\, ^{o,\bF}\! ( H\!\cdot\!A )_{t}.
	\end{equation}
	Using the definition of $L$ and applying again \cite[Theorem 5.7]{HeWanYan1992}  we have
	\[
	{^{o,\bF}}L = {^{o,\bF}}(L 1_{[0,T]}) = 1_{[0,T]}
	{^{o,\bF}}L =  1_{[0,T]} {^{o,\bF}}(L^T) =1_{[0,T]} {^{o,\bF}}(H^T) = 1_{[0,T]} {^{o,\bF}}H,
	\]
	which implies that
	\begin{equation}\label{eq:ofL.A2}
	{^{o,\bF}}L\!\cdot\!A =  (1_{[0,T]}{^{o,\bF}}H )\!\cdot\!A = ({^{o,\bF}}H \!\cdot\!A)^T.
	\end{equation}
	Using \eqref{eq:ofL.A1} and \eqref{eq:ofL.A2} we see that the martingale $N$ defined by \eqref{eq:martofL.A} can be written on $[0,T]$ as
	\[
	N_t = {^{o,\bF}\!( H\!\cdot\!A )_{t}} -  ({^{o,\bF}}H\!\cdot\!A)_t={^{o,\bF}\! B^\bG_t}- \int_0^t\,  {^{o,\bF}} (b^\bG a^\bG)_udu, \qquad t \in [0,T].
	\]
	Since $T$ was arbitrary, this proves that the  process given  by \eqref{eq:martBG} is an $\bF$-martingale.
	
	Finally, we will now prove that if  $\bF$ is $\bP$-immersed in $\bG$, then the martingale $M^B$ is a null process. Indeed, for any $t\geq 0$, we have
	\begin{align*}
	{^{o,\bF} B^\bG_t}&=\bE\left( \int_0^t  b^\bG_u a^\bG_u  du \vert \cF_t \right )= \int_0^t \bE\left( b^\bG_u a^\bG_u \vert \cF_t \right ) du\\
	&=\int_0^t \bE\left( b^\bG_u a^\bG_u \vert \cF_u \right )du = \int_0^t\,  {^{o,\bF}} (b^\bG a^\bG)_udu,
	\end{align*}
	where the third equality is a consequence  of immersion of $\bF$  in $\bG$ .
	
	The proof of the lemma is complete.
\end{proof}

\begin{remark}\label{lem:heroic-remark}
	It is important to note that Lemma \ref{lem:heroic-result} is true regardless whether the process $X$ is adapted with respect to  $\bF$ or not. Special versions of this lemma are known in the filtering theory. See for example Lemma 8.4 in \cite{LipShi2001}, or the proof of Theorem 8.11 in \cite{RW2000}.
\end{remark}

The next theorem is the main result in this section.

\begin{theorem}\label{adapted} Assume A1-A3. Then, the $\bF$-characteristic triple of $X$ is given as
	\[
	B^\bF =  \int_0^\cdot {^{o,\bF}}(b^{\bG} a^{\bG})_sds, \quad
	C^\bF = C^\bG, \quad
	\nu^\bF(dt,dx) = \left ( K^\bG_t(dx)  a^\bG_tdt \right )^{p, \bF}.
	\]
\end{theorem}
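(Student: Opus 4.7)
The plan is to take the $\bG$-canonical decomposition \eqref{eq:XGspecial}, namely $X(\chi) = B^{\bG} + M^{\bG}$, and push it through the $\bF$-optional projection. Since $X$ is $\bF$-adapted, $X(\chi)$ defined by \eqref{eq:cutoff} is also $\bF$-adapted (the jump sum involves only $X$ itself), so $^{o,\bF}X(\chi)=X(\chi)$. The projection therefore yields
\[
X(\chi) \;=\; {}^{o,\bF}\!B^{\bG} \;+\; {}^{o,\bF}\!M^{\bG}.
\]
By assumption A3, $M^{\bG}$ is a true $\bG$-martingale, and the tower property applied to $\bF\subset\bG$ shows that $^{o,\bF}M^{\bG}$ is a true $\bF$-martingale. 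By Lemma~\ref{lem:heroic-result} (invoking A1 and A2), $^{o,\bF}B^{\bG}$ exists and decomposes as $\int_0^\cdot {}^{o,\bF}(b^{\bG}a^{\bG})_u\,du + M^B$ with $M^B$ an $\bF$-martingale. Substituting,
\[
X(\chi) \;=\; \int_0^\cdot {}^{o,\bF}(b^{\bG}a^{\bG})_u\,du \;+\; \bigl(M^B + {}^{o,\bF}\!M^{\bG}\bigr).
\]
The integral on the right is continuous and $\bF$-adapted, hence $\bF$-predictable with locally integrable variation (dominated pathwise by the projection of $\int_0^\cdot |b^{\bG}_u|a^{\bG}_u du$, finite by A1), while the second summand is an $\bF$-martingale starting at $0$. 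By uniqueness of the canonical decomposition in $\bF$, the first characteristic is $B^{\bF}=\int_0^\cdot {}^{o,\bF}(b^{\bG}a^{\bG})_u\,du$.

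For the second characteristic, I would appeal to the pathwise nature of the continuous part of the quadratic variation. Since $X^{c,\bG}$ is a continuous local martingale, $C^{\bG}=\langle X^{c,\bG}\rangle=[X^{c,\bG}]$, and $[X^{c,\bG}]$ equals $[X]^{c}$, the continuous part of $[X]$, which depends only on the sample paths of $X$, not on the filtration. The same identity in $\bF$ gives $C^{\bF}=[X^{c,\bF}]=[X]^{c}$, so $C^{\bF}=C^{\bG}$.

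For the third characteristic, note that the jump measure $\mu$ of $X$ is defined pathwise, so it is the same object in both filtrations. By definition $\nu^{\bF}=\mu^{p,\bF}$, and $\nu^{\bG}=\mu^{p,\bG}$, written here as $K^{\bG}_t(dx)\,a^{\bG}_t\,dt$. The plan is to show $\mu^{p,\bF}=(\nu^{\bG})^{p,\bF}$, which reduces to checking that for every nonnegative $\tilde{\cP}_{\bF}$-measurable $W$,
\[
\bE\bigl( W*\mu_\infty \bigr) \;=\; \bE\bigl( W*\nu^{\bG}_\infty \bigr),
\]
after a suitable localization. Since $\tilde{\cP}_{\bF}\subset\tilde{\cP}_{\bG}$, such a $W$ is $\bG$-predictable, so $W*\mu - W*\nu^{\bG}$ is a $\bG$-local martingale; localizing by $\bG$-stopping times $\tau_n$ and dominating through the fact that $W*\nu^{\bG}$ has (up to stopping) finite expectation gives the equality in the limit by monotone convergence. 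Thus $\nu^{\bF}=(\nu^{\bG})^{p,\bF}$.

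The main obstacle is the bookkeeping around the $\bF$-predictability, finite variation, and integrability of the candidate $B^{\bF}$ - in particular, ensuring that the decomposition produced by Lemma~\ref{lem:heroic-result} really yields a bona fide $\bF$-special-semimartingale decomposition to which the uniqueness argument applies, and, separately, verifying the $\sigma$-integrability hypotheses needed to define $(\nu^{\bG})^{p,\bF}$ and to justify the projection identity for $\mu$. The $C$-identity, by contrast, is essentially automatic once one recognizes that the continuous quadratic variation is filtration-free.
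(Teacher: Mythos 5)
Your proposal is correct and follows essentially the same route as the paper: project the canonical decomposition $X(\chi)=B^{\bG}+M^{\bG}$ onto $\bF$, use Lemma \ref{lem:heroic-result} together with A3 to identify the $\bF$-predictable finite-variation part, and conclude by uniqueness of the canonical decomposition; then identify $\nu^{\bF}=\mu^{p,\bF}$ with $(\nu^{\bG})^{p,\bF}$ by comparing the associated Dol\'eans measures on $\tilde{\cP}_{\bF}\subset\tilde{\cP}_{\bG}$. Two small remarks. For $C^{\bF}=C^{\bG}$ you argue pathwise via $C=[X]^{c}$, which is exactly the content of the reference the paper cites (\cite[Remark 9.20]{Jac1979}); this is fine and arguably more self-contained. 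For the third characteristic, the localization detour is unnecessary (and slightly delicate, since $W*\mu-W*\nu^{\bG}$ need not be a local martingale for arbitrary nonnegative $W$): the identity $\bE(W*\mu_{\infty})=\bE(W*\nu^{\bG}_{\infty})$ for nonnegative $\tilde{\cP}_{\bG}$-measurable $W$ is the defining property of the $\bG$-compensator, so it holds a fortiori for $\tilde{\cP}_{\bF}$-measurable $W$. The ``obstacle'' you flag — the $\sigma$-integrability needed to define $(\nu^{\bG})^{p,\bF}$ — is closed in the paper exactly as you would hope: since $X$ is an $\bF$-semimartingale, $M_{\mu}$ is $\sigma$-finite on $\tilde{\cP}_{\bF}$ (proof of Theorem 11.15 in \cite{HeWanYan1992} with $\tilde{\cP}$ replaced by $\tilde{\cP}_{\bF}$), and the restriction identity $M_{\nu^{\bG}}|_{\tilde{\cP}_{\bF}}=M_{\mu}|_{\tilde{\cP}_{\bF}}$ transfers this $\sigma$-finiteness to $\nu^{\bG}$, so $(\nu^{\bG})^{p,\bF}$ exists and coincides with $\nu^{\bF}$ by uniqueness of dual predictable projections.
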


\begin{proof}
	Let us consider the process $X(\chi)$ given by \eqref{eq:cutoff}.  As observed earlier,  $X(\chi)$ is a $\bG$-special semimartingale with unique canonical decomposition \eqref{eq:XGspecial}. Since $X(\chi)$ is $\bF$-adapted, it is also an $\bF$-special semimartingale, with unique canonical decomposition, say
	\begin{equation}\label{eq:Fspecial}
	X(\chi) = B^\bF + M^\bF,
	\end{equation}
	where $B^\bF$ is $\bF$-predictable process of finite  variation and $M^\bF$ is an $\bF$-local martingale. The process $B^\bF$ is the first characteristic in the $\bF$-characteristic triple of $X$.
	
	Our first goal is to provide a formula for $B^\bF$ in terms of the $\bG$-characteristics of $X$. Towards this end we first observe that from Lemma \ref{lem:heroic-result} if follows that $^{o,\bF} B^\bG$ exists. Recall that by assumption A3 the process $M^\bG$ showing in \eqref{eq:XGspecial} is a $\bG$-martingale.  Since for  any bounded $\bF$-stopping time $\tau \leq T $, using the fact that $\tau$ is a $\bG$-stopping time and Doob's optional stopping theorem, we have
	\[
	\bE M^\bG_\tau = \bE M^\bG_0 < \infty .
	\]
	Thus $M^\bG$ is $\sigma$-integrable with respect to $\cF_\tau$ for every bounded $\bF$-stopping time $\tau$, so
	its optional projection $^{o,\bF} M^\bG$ exists   (see
	\cite[Theorem 5.1]{HeWanYan1992}). From this, from \eqref{eq:XGspecial} and from the linearity of the optional projection  we  conclude that the optional projection ${^{o,\bF}}X(\chi)$ exists, and is given as
	\begin{equation}\label{eq:op1}
	{^{o,\bF}}X(\chi)= \,^{o,\bF} M^\bG+\,^{o,\bF} B^\bG.
	\end{equation}
	Since $X(\chi)$ is $\ff$-adapted we have
	\begin{equation}\label{eq:op2}
	{^{o,\bF}}X(\chi) = X(\chi) .
	\end{equation}
	Combining \eqref{eq:op1} and \eqref{eq:op2} we obtain
	\[
	X(\chi)= \,^{o,\bF} M^\bG+\,^{o,\bF} B^\bG.
	\]
	Thus, since  the process  $^{o,\bF} B^\bG_t- \int_0^t\,  {^{o,\bF}} (b^\bG a^\bG)_udu$ is an  $\bF$-martingale (by Lemma \ref{lem:heroic-result} again),  and  since, in view of A3, the process $^{o,\bF} M^\bG$ is an $\bF$-martingale, we see that
	\[
	X_t(\chi) =M^\bF_t+\int _0 ^ t  {^{o,\bF}} (b^\bG a^\bG)_s  ds,
	\]
	where $M^\bF_t= \,^{o,\bF} M^\bG_t+\, ^{o,\bF} B^\bG_t- \int_0^t\,  {^{o,\bF}} (b^\bG a^\bG)_udu$. Thus, by uniqueness of the decomposition \eqref{eq:Fspecial} of the special $\bF$-semimartingale $X$, we conclude that
	\[
	B^\bF_t=  \int _0 ^ t {^{o,\bF}}(b^\bG a^\bG)_sds.
	\]
	\noindent
	The second formula, $C^\bF = C^\bG$, follows from \cite[Remark 9.20]{Jac1979}.
	
	\medskip
	It remains to derive a formula for $\nu^\bF$. Towards this end, we recall that ${\nu^\bG} = \mu^{p,\bG}$ is a $\tilde{\cP}_\bG$-predictably   $\sigma$-integrable random measure (i.e., using the notation \eqref{mpi}, $M_{\nu^\bG}$ is $\sigma$-finite on $\tilde{\cP}_\bG$,   such that
	\[
	M_{\nu^\bG}|_{\tilde{\cP}_\bG} =  M_{\mu}|_{\tilde{\cP}_\bG}.
	\]
	Thus since  $\tilde{\cP}_\bF \subset \tilde{\cP}_\bG$ we have
	\begin{equation}\label{eq:Mnu=Mmu}
	M_{\nu^\bG}|_{\tilde{\cP}_\bF} =  M_{\mu}|_{\tilde{\cP}_\bF}.
	\end{equation}
	Since $M_{\mu}$ is $\sigma$-finite on ${\tilde{\cP}_\bF}$ (see the proof of Theorem 11.15 \cite{HeWanYan1992}, with ${\tilde{\cP}}$ there replaced by ${\tilde{\cP}_\bF}$),
	the above implies that $M_{\nu^\bG}$ is also  $\sigma$-finite on ${\tilde{\cP}_\bF}$. So $\nu^\bG$ is $\bF$-predictably $\sigma$-integrable. Thus it has the $\bF$-dual predictable projection $(\nu^\bG)^{p,\bF}$ which is characterized by
	\begin{equation}\label{eq:MnuGpF=MnuG}
	M_{(\nu^\bG)^{p, \bF }}|_{\tilde{\cP}_\bF} =  M_{\nu^\bG}|_{\tilde{\cP}_\bF}.
	\end{equation}
	This and \eqref{eq:Mnu=Mmu} imply that
	\[
	M_{(\nu^\bG)^{p, \bF }}|_{\tilde{\cP}_\bF} = M_{\mu}|_{\tilde{\cP}_\bF}.
	\]
	So, by the uniqueness of dual predictable projections we have $(\nu^\bG)^{p, \bF } = \nu^\bF$.
	The proof is complete.
\end{proof}

\begin{remark}
	Let us note that we also have
	\[
	(\nu^\bG)^{p,\bF} = ((\nu^\bG)^{o,\bF})^{p,\bF}.
	\]
	Indeed, by analogous reasoning as in the proof of  \cite[Theorem 11.8]{HeWanYan1992} we can prove that the random measure $\nu^\bG$ admits an $\bF$-dual optional projection if and only if
	it is $\bF$-optionally $\sigma$-integrable. Now, recall that $M_{\nu^\bG}$ is  $\sigma$-finite on ${\tilde{\cP}_\bF}$.
	This and the fact that ${\tilde{\cP}_\bF} \subset {\tilde{\cO}_\bF}$ imply that  $M_{\nu^\bG}$ is also  $\sigma$-finite on ${\tilde{\cO}_\bF}$, so $\nu^\bG$ is  $\bF$-optionally $\sigma$-integrable. 	Thus there exists $ (\nu^\bG)^{o,\bF}$ -- the $\bF$-dual optional projection of $\nu^\bG$, i.e., the unique  $\bF$-optional measure  which  is $\bF$-optionally $\sigma$-integrable such that
	\[
	M_{\nu^\bG}|_{\tilde{\cO}_\bF} =  M_{(\nu^\bG)^{o,\bF}}|_{\tilde{\cO}_\bF}.
	\]
	Hence we  have
	\begin{equation}\label{eq:Mnu=Mnuo}
	M_{\nu^\bG}|_{\tilde{\cP}_\bF} =  M_{(\nu^\bG)^{o,\bF}}|_{\tilde{\cP}_\bF}\,.
	\end{equation}
	Since $M_{\nu^\bG}$ is $\sigma$-finite on $\tilde{\cP}_\bF$, so is $M_{(\nu^\bG)^{o,\bF}}$. Therefore, invoking again  \cite[Theorem 11.8]{HeWanYan1992}, we conclude that there exists the $\bF$-predictable projection of $(\nu^\bG)^{o,\bF}$, i.e. $((\nu^\bG)^{o,\bF})^{p,\bF}$,  for which we have
	\[
	M_{((\nu^\bG)^{o,\bF})^{p,\bF}}|_{\tilde{\cP}_\bF} =  M_{(\nu^\bG)^{o,\bF}}|_{\tilde{\cP}_\bF}.
	\]
	From the latter equality and from \eqref{eq:Mnu=Mmu} and \eqref{eq:Mnu=Mnuo} we deduce that
	\[
	M_{((\nu^\bG)^{o,\bF})^{p,\bF}}|_{\tilde{\cP}_\bF}  = M_{\mu}|_{\tilde{\cP}_\bF}.
	\]
	By uniqueness of the $\bF$-dual predictable projection of $\mu$ we finally obtain  \[\nu^\bF = \mu^{p, \bF} = ((\nu^\bG)^{o,\bF})^{p,\bF} .\] 		\qed
	
\end{remark}

\bigskip

\noindent {\bf The case of immersion between $\bF$ and $\bG$.}

\medskip

We briefly discuss here the case when $\bF$ is $\bP$-immersed in $\bG$.
We will show that 	$(B^\bF ,C^\bF,\nu^\bF) = (B^\bG ,C^\bG,\nu^\bG)$. Towards this end, let us consider the process $X(\chi)$ defined by \eqref{eq:XGspecial}.

Clearly, the process $X(\chi)$ has bounded jumps and is both $\bG$-adapted  and $\bF$-adapted.
Thus, it is a special semimartingale in both filtrations, and hence it has the canonical decompositions
\[
X(\chi)  = M^\bF+B^\bF =  M^\bG + B^\bG.
\]
Since, by immersion, $M^\bF$ is a $\bG$-martingale and, obviously, $B^\bF$  is $\bG$-predictable, one has that  $M^\gg=M^\ff$ and $B^\gg=B^\ff$ (by uniqueness of canonical $\bG$-decomposition of $\breve{X}$).

\medskip
\noindent  The fact that $C^\bF = C^\bG$ follows, again, from \cite[Remark 9.20, p.288]{Jac1979}.

\medskip
\noindent
Finally, we verify that $\nu^\bG = \nu^\bF$. Note that, for any positive real measurable function $g$, the process
$g * \mu - g * \nu^\bF$ is an $\bF$-local martingale and hence, by immersion, a $\bG$-local martingale. This implies, by uniqueness of the compensator and by the fact that $\nu^\bF$ is $\bG$-predictable, that $\nu^\bF = \nu^\bG$.

In conclusion, we have

\begin{proposition} Assume that $\bF$ is $\bP$-immersed in $\bG$. Then,
	\[(B^\bF ,C^\bF,\nu^\bF) = (B^\bG ,C^\bG,\nu^\bG). \]
\end{proposition}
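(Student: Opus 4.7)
The plan is to exploit the uniqueness of the canonical decomposition of a special semimartingale, applied in each of the two filtrations $\bF$ and $\bG$. I would start from the truncated process $X(\chi)$ defined in \eqref{eq:cutoff}. Since $X(\chi)$ has jumps bounded by $1$ and is adapted to both filtrations, it is a special semimartingale in each, and admits canonical decompositions
\[
X(\chi) = B^\bF + M^\bF = B^\bG + M^\bG,
\]
where $B^\bF, B^\bG$ are predictable finite variation processes null at $0$ in their respective filtrations and $M^\bF, M^\bG$ are local martingales starting at $0$.

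The central step is to reconcile the two decompositions via $\bP$-immersion. Because $\bF \subset \bG$, the $\bF$-predictable process $B^\bF$ is automatically $\bG$-predictable. On the other hand, immersion of $\bF$ in $\bG$ (extended in the standard way from true martingales to local martingales by localization along $\bF$-stopping times, which are also $\bG$-stopping times) implies that $M^\bF$ is a $\bG$-local martingale. Hence $X(\chi)=B^\bF+M^\bF$ is itself a valid $\bG$-canonical decomposition, and uniqueness of such a decomposition forces $B^\bG = B^\bF$ and $M^\bG = M^\bF$. In particular, uniqueness of the continuous martingale part then yields $X^{c,\bG} = X^{c,\bF}$.

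For the second characteristic I would argue the same way at the level of predictable quadratic variation: $\langle X^{c,\bF}\rangle^\bF$ is $\bF$-predictable (hence $\bG$-predictable), and $(X^{c,\bF})^2 - \langle X^{c,\bF}\rangle^\bF$ is an $\bF$-local martingale, which by immersion is also a $\bG$-local martingale; by uniqueness of the $\bG$-predictable quadratic variation of $X^{c,\bG}$ one obtains $C^\bF = C^\bG$. For the third characteristic the same idea applies to the jump measure $\mu$: for any nonnegative $\tilde{\cP}_\bF$-measurable $W$, the process $W * \mu - W * \nu^\bF$ is an $\bF$-local martingale by the defining property of the $\bF$-compensator, hence a $\bG$-local martingale by immersion, and since $\nu^\bF$ is $\bF$-predictable (therefore $\bG$-predictable), uniqueness of the $\bG$-compensator of $\mu$ delivers $\nu^\bF = \nu^\bG$.

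The main thing to be careful about is the extension of the immersion property from the martingale level stated in the footnote to local martingales and to the random-measure/compensator setting. Both extensions are classical (localization by common stopping times, and passage through simple integrands $W = \1_A \1_{(s,t]}$), but I would flag them explicitly. Beyond that, no projection or approximation arguments are needed: the entire proof is driven by uniqueness of $\bG$-canonical decompositions and $\bG$-compensators, with immersion providing the $\bG$-martingale property of objects constructed in $\bF$.
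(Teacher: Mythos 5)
Your proof is correct and follows essentially the same route as the paper: canonical decompositions of $X(\chi)$ in both filtrations, immersion (extended to local martingales by localization) plus uniqueness of the $\bG$-canonical decomposition to get $B^\bF=B^\bG$ and $M^\bF=M^\bG$, and immersion plus uniqueness of the $\bG$-compensator of $\mu$ to get $\nu^\bF=\nu^\bG$. The only divergence is the second characteristic, where the paper simply cites Jacod's Remark 9.20 while you argue directly; your argument is fine, provided you make explicit that the identification $X^{c,\bF}=X^{c,\bG}$ requires the (standard) fact that the purely discontinuous part of $M^\bF=M^\bG$ stays purely discontinuous as a $\bG$-local martingale (e.g.\ via the filtration-free characterization $[N]_t=\sum_{s\le t}(\Delta N_s)^2$), so that the $\bF$-decomposition into continuous and purely discontinuous parts is also the $\bG$-decomposition before you invoke uniqueness of the continuous martingale part.
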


%%%%%%%%%%%%%%%%%%%%%%%%%%%%%%%%%%%%%%%%%%%%%%%%%%%%%%%%%%%%%%%%%%%%%%%%%%%%%%%%%%%%%%%%%%%%%%
\section{Study of Problem B: The Case of $X$ not adapted to $\bF$}   \label{sec:not-adapted}
%%%%%%%%%%%%%%%%%%%%%%%%%%%%%%%%%%%%%%%%%%%%%%%%%%%%%%%%%%%%%%%%%%%%%%%%%%%%%%%%%%%%%%%%%%%%%%
%%%%%%%%%%%%%%%%%%%%%%%%%%%%%%%%%%%%%%%%%%%%%%%%%%%%%%%%%%%%%%%%%%%%%%%%%%%%%%%%%%%%%%%%%%%%%%

In this section we consider the case where  $X$ is a $\gg$-special semimartingale, but it is not adapted to $\bF$. Therefore, we shall study here the $\bF$-optional projection $^{o,\bF} X$ of $X$ and its semimartingale characteristics. In particular, in Theorem \ref{not-adapted} we provide sufficient conditions on $X$ under which the $\bF$-optional projection $^{o,\bF} X$ exists and is an $\bF$-special semimartingale.

We have the following canonical decompositions of $X$ (see \cite[Corollary 11.26]{HeWanYan1992} or  \cite[Corollary II 2.38]{JacShi2002}):
\begin{equation}\label{canG}
X_t =X_0+X^{c,\bG}_t+\int _0^t \int_\bR  x  (\mu^\bG (ds,dx)-\nu^\bG (ds,dx))+\wh{B}_t^\bG = X_0 + \wh{M}^\bG_t+\wh{B}_t^\bG,
\end{equation}
where $\wh{B}^\bG$,  called the modified first characteristic,  and  the $\gg$ local martingale $\wh{M}^\bG$  are given by
\begin{align}\label{eq:modBG}
\wh{B}^\bG_t &= {B}^\bG_t + \int_0^t \int_{|x| >1}  x \nu^\bG(ds,dx), \qquad \qquad t \geq 0,
\\ \nonumber
\wh{M}^\bG_t &= X^{c,\bG}_t+\int _0^t \int_\bR  x  (\mu^\bG (ds,dx)-\nu^\bG (ds,dx)) \\ \nonumber
&= M^\bG_t + \int_0^t \int_{|x| >1}  x (\mu^\bG(ds,dx) - \nu^\bG(ds,dx)), \quad t \geq 0.
\end{align}
Moreover if $^{o,\bF} X$ exists and is an $\bF$-special semimartingale, then
\begin{equation}\label{canF}
^{o,\bF} X_t=X_0+(^{o,\bF} X)^{c,\bF}_t+\int _0^t \int_\bR x (\mu^\bF (ds,dx)-\nu ^\bF(ds,dx))+B_t^\bF = X_0 + \wh{M}^\bF_t+\wh{B}_t^\bF .
\end{equation}
Note that  $\mu^\bF$ is the random measure of jumps of $^{o,\bF}X$ and $\nu^\bF$ denotes its $\bF$-compensator whereas in \eqref{canG} $\mu^\bG$ denotes random measure of jumps of $X$ and $\nu^\bG$ its $\bG$-compensator.
We also  note that  the first $\bF$-characteristic  $B^\bF$ can be computed from the modified one (i.e. $\wh{B}^\bF$) and $\nu^\bF$ by means of a counterpart of the formula \eqref{eq:modBG}
i.e.
\begin{equation}\label{eq:modBF}
\wh{B}^\bF_t = {B}^\bF_t + \int_0^t \int_{|x| >1}  x \nu^\bF(ds,dx).
\end{equation}

In this section we will work under the following additional standing assumptions:

\medskip

{$\hat{\textrm{A1}}$.} For every $t \geq 0$  we have
\[
\bE  \big( \int_0^t | \hat b^\bG_u | a^\bG_u  du \big)  < \infty,
\]
where $a^\bG\geq 0$ is defined in \ref{eq:ass-AG}, and

\begin{equation}\label{eq:def:hatbG}
\wh{b}^\bG_t = {b}^\bG_t +
\int_{|x| > 1} x K^\bG_t(dx), \quad t \geq 0.
\end{equation}

{$\hat{\textrm{A2}}$.} The process $\wh b^\bG a^\bG$ admits an $\bF$-optional projection.

{$\hat{\textrm{A3}}$.} $\wh{M}^\bG$ is a  square integrable martingale.\footnote{Note that in Assumption A3 we postulated that $M^\bG$ is a martingale, but not necessarily a  square integrable martingale.}

{B1.} There exists a square integrable $\bF$-martingale $Z$ such that the predictable representation property holds for  $(\bF,Z)$: any square integrable $\bF$-martingale $M$ admits {a} representation $M_t=M_0+\int_0^t \psi_u\, dZ_u$, $t\geq 0$, with an $\bF$-predictable process $\psi$.

{B2. }The $\bF$-martingale $Z$ is a $\bG$-martingale.%: $ \bF \hookrightarrow \bG$ (immersion).

{B3.} $\cG_0$ is trivial (so $\cF_0$ is also trivial).

{B4.} The predictable projection $\ ^{p, \bF}\! \left( \frac{d\langle \wh{M}^\bG, Z\rangle^\bG_t}{d\langle Z\rangle^\bG_t}\right)$ exists for each $t\geq 0$.

\medskip

\begin{remark}\label{rem:immersion}
	Note that, under B1 and B2, the  immersion property holds between $\ff$ and $\gg$. Thus, we have $$\langle Z\rangle = \langle Z\rangle^\bG,$$ where, we recall, $\langle Z\rangle$ denotes the $\bF$--predictable quadratic variation process of $Z$.  Consequently, $[Z]-\langle Z\rangle $ is a $\bG$--martingale.
\end{remark}

In order to proceed, we denote by $\lambda_{\langle Z\rangle}$ a measure on $\cF  \otimes \cB(\bR_+)$ defined by
\[
\lambda_{\langle Z\rangle} (A) = \bE\Big( \int_{[0,\infty[}\1_{A}(\cdot , s ) d {\langle Z\rangle_s}(\cdot) \Big), \quad \text{ for } A \in \cF  \otimes \cB(\bR_+).
\]

In particular, for any $\cF  \otimes \cB(\bR_+)$-measurable function $f(\omega,s)$ we have
\[\int_{\Omega \times \bR_+} f(\omega,s)d\lambda_{\langle Z\rangle}(\omega,s)=\bE\Big( \int_{[0,\infty[}f_s d {\langle Z\rangle_s}\Big),\]
where $f_s(\omega):=f(\omega,s)$, if the expression on the right hand side is well defined. The above relation gives definition of integrability with respect to the measure $\lambda_{\langle Z\rangle} $.

The following theorem presents computation of the  $\bF$-characteristics of  $^{o,\bF}X$.

\begin{theorem}\label{not-adapted}
	Let X be a special $\bG$-semimartingale with $\bG$-characteristic triple $(B^\bG ,C^\bG,\nu^\bG)$. Assume that $\hat{\textrm{A1}}$ -- $\hat{\textrm{A3}}$ and B1 --   B4 are satisfied.
	Then the optional projection $^{o,\bF}X$ exists, it is an  $\bF$-special semimartingale and its $\bF$-characteristics are
	\begin{equation}\label{eq:BF}
	B^\bF_\cdot =  \int_0^\cdot {^{o,\bF}}\Big(b^{\bG}_s a^{\bG}_s + \int_{|x| >1} x K^\bG_s(dx)a^\bG_s \Big) ds  - \int_0^\cdot \int_{|x| >1} x  \nu^\bF(ds,dx),
	\end{equation}
	\begin{equation}\label{eq:CF}
	C^\bF_\cdot = \int_0^\cdot h^2_s d \langle Z^c\rangle _s,
	\end{equation}
	\begin{equation}\label{eq:nuF}
	\nu^\bF(A,dt) = \int_{\bR} \1_{A\setminus \set{0}}(h_t x) \nu^{Z,\bF}(dx,dt), \qquad A \in \mathcal{B} (\bR),
	\end{equation}
	where $\nu^{Z,\bF}$ is the $\ff$-compensator of the jump measure of $Z$ and
	\begin{equation}\label{eq:h}
	h_t= \ ^{p, \bF}\! \left( \frac{d\langle \wh{M}^\bG, Z\rangle^\bG_t}{d\langle Z\rangle^\bG_t}\right) = \bE\left ( \frac{d\langle \wh{M}^\bG, Z\rangle^\bG_t}{d\langle Z\rangle^\bG_t}\bigg\vert \F_{t-}  \right )=\bE\left ( \frac{d\langle \wh{M}^\bG, Z\rangle^\bG_t}{d\langle Z\rangle_t}\bigg\vert \F_{t-}  \right ) \qquad  \lambda_{\langle Z\rangle}  \-- a.e.
	\end{equation}
\end{theorem}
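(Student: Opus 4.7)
The plan is to follow the same template as Theorem \ref{adapted}, but now applied to the modified $\bG$-canonical decomposition $X = X_0 + \wh{M}^\bG + \wh{B}^\bG$ of \eqref{canG}, and then to use the predictable representation property B1 together with the immersion B2 to identify the integrand $h$. First I would verify that $^{o,\bF}X$ exists and is an $\bF$-special semimartingale. Since $\wh{M}^\bG$ is a true $\bG$-martingale (by $\hat{\textrm{A3}}$), its $\bF$-optional projection $^{o,\bF}\wh{M}^\bG$ exists and is an $\bF$-martingale. The process $\wh{B}^\bG = \int_0^\cdot \wh{b}^\bG_s a^\bG_s\, ds$ is absolutely continuous, and $\hat{\textrm{A1}}$--$\hat{\textrm{A2}}$ allow us to apply Lemma \ref{lem:heroic-result} to the integrand $\wh{b}^\bG a^\bG$, yielding ${^{o,\bF}}\wh{B}^\bG_t = \int_0^t {^{o,\bF}}(\wh{b}^\bG a^\bG)_s\, ds + M^{\wh{B}}_t$ with an $\bF$-martingale $M^{\wh{B}}$. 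Adding these pieces gives the $\bF$-special decomposition ${^{o,\bF}}X = X_0 + \wh{M}^\bF + \wh{B}^\bF$ with $\wh{M}^\bF := {^{o,\bF}}\wh{M}^\bG + M^{\wh{B}}$ and $\wh{B}^\bF := \int_0^\cdot {^{o,\bF}}(\wh{b}^\bG a^\bG)_s\, ds$. Finally, relation \eqref{eq:modBF} together with $\wh{B}^\bF$ above gives formula \eqref{eq:BF} for $B^\bF$.

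For the second and third characteristics I would first obtain the representation of $\wh{M}^\bF$. Since $\wh{M}^\bG$ is square integrable ($\hat{\textrm{A3}}$), $\wh{M}^\bF$ is square integrable as well, so by the PRP assumption B1 one has $\wh{M}^\bF = \int_0^\cdot h_s\, dZ_s$ for some $\bF$-predictable $h$. To identify $h$, I would use the key observation that, because ${^{o,\bF}}X_t$ is the $\cF_t$-conditional expectation of $X_t$, one has $\bE[(X_t - {^{o,\bF}}X_t)\, Y_t] = 0$ for every bounded $\cF_t$-measurable random variable $Y_t$. Taking $Y_t = \int_0^t \phi_s\, dZ_s$ with $\phi$ bounded and $\bF$-predictable, using integration by parts, and exploiting that $Y$ is a $\bG$-martingale (immersion from B2) and that the $\bG$-predictable finite-variation part $\wh{B}^\bG - \wh{B}^\bF$ is continuous, I would reduce this identity to
\[
\bE\!\int_0^t \phi_s\, d\langle \wh{M}^\bG - \wh{M}^\bF, Z\rangle^\bG_s \;=\; -\,\bE\!\int_0^t Y_{s-}\, d(\wh{B}^\bG - \wh{B}^\bF)_s.
\]
The right-hand side vanishes by a Fubini argument: since $\wh{B}^\bG, \wh{B}^\bF$ are both absolutely continuous, the $\bF$-optional and $\bF$-predictable projections of $\wh{b}^\bG a^\bG$ coincide $dt\!\otimes\!d\bP$-a.e., so pairing with the $\bF$-predictable $Y_{s-}$ gives zero. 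Combining this with $\langle \wh{M}^\bF, Z\rangle^\bG = \int h\, d\langle Z\rangle^\bG$ (from PRP and from $\langle Z\rangle = \langle Z\rangle^\bG$ by Remark \ref{rem:immersion}), and using B4 together with the arbitrariness of $\phi$, I would conclude
\[
h_s \;=\; {^{p,\bF}}\!\bigg(\frac{d\langle \wh{M}^\bG, Z\rangle^\bG_s}{d\langle Z\rangle^\bG_s}\bigg) \qquad \lambda_{\langle Z\rangle}\text{-a.e.},
\]
and the remaining identities in \eqref{eq:h} follow from the definition of the predictable projection together with the triviality of $\cF_0$ (B3) to handle integrability.

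Having identified $h$, formulas \eqref{eq:CF} and \eqref{eq:nuF} fall out. Because $\wh{B}^\bF$ is continuous of finite variation, the continuous $\bF$-martingale part of ${^{o,\bF}}X$ coincides with the continuous part of $\wh{M}^\bF = \int h\, dZ$, namely $\int h\, dZ^c$, so $C^\bF = \langle \int h\, dZ^c\rangle = \int h^2\, d\langle Z^c\rangle$. Similarly, continuity of $\wh{B}^\bF$ implies $\Delta({^{o,\bF}}X)_t = h_t \Delta Z_t$, so the jump measure $\mu^\bF$ is the image of $\mu^Z$ under the map $(s,x)\mapsto(s, h_s x)$ (restricted to nonzero values), and the same pushforward applied to the $\bF$-compensator $\nu^{Z,\bF}$ gives \eqref{eq:nuF}; $\bF$-predictability of the image measure is guaranteed by $\bF$-predictability of $h$. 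I expect the main obstacle to be the identification of $h$ in Step 3: the vanishing of the contribution of $\wh{B}^\bG - \wh{B}^\bF$ in the Fubini argument hinges crucially on the absolute continuity of $B^\bG$ via \eqref{eq:ass-AG}, and on being careful with the distinction between optional and predictable projections, while the analogous argument would fail if $\wh{B}^\bG$ had a singular part or jumps.
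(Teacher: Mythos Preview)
Your plan matches the paper's proof in its overall architecture: project the canonical decomposition $X = X_0 + \wh{M}^\bG + \wh{B}^\bG$, apply Lemma~\ref{lem:heroic-result} to $\wh{B}^\bG$, write $\wh{M}^\bF = \int h\,dZ$ via B1, identify $h$ by testing against $\bF$-stochastic integrals of $Z$, and then read off $C^\bF$ and $\nu^\bF$ from this representation.

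The one genuine difference is in how you set up the identification of $h$. The paper first uses the \emph{immersion clause} of Lemma~\ref{lem:heroic-result} to kill $M^{\wh{B}}$, so that $\wh{M}^\bF = {^{o,\bF}}\wh{M}^\bG$ exactly; it then starts from $\bE(\gamma\,\wh{M}^\bG_t)=\bE(\gamma\,\wh{M}^\bF_t)$ for bounded $\gamma\in L^\infty(\cF_t)$, expands both sides by integration by parts together with the Kunita--Watanabe decomposition of $\wh{M}^\bG$ against $Z$, and arrives at $\bE\int_0^t k_s H_s\,d[Z]_s = \bE\int_0^t k_s h_s\,d[Z]_s$. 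You instead keep $M^{\wh{B}}$ inside $\wh{M}^\bF$ and start from $\bE[(X_t-{^{o,\bF}}X_t)Y_t]=0$, which drags the term $\wh{B}^\bG-\wh{B}^\bF$ into the computation and forces your Fubini step to remove it. That Fubini step is correct (it is essentially a restatement of the immersion clause you chose not to invoke), but it creates an integrability mismatch: you declare $Y_t$ bounded and then set $Y_t=\int_0^t\phi_s\,dZ_s$ with $\phi$ bounded, which only yields $Y_t\in L^2$; since $\wh{B}^\bG_t$ is merely in $L^1$ under $\hat{\textrm{A1}}$, the product $(\wh{B}^\bG_t-\wh{B}^\bF_t)Y_t$ need not be integrable, and your claim that $\wh{M}^\bF$ is square integrable also tacitly requires $M^{\wh{B}}=0$. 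The paper sidesteps both issues by keeping Step~3 entirely in the square-integrable martingale world and by first taking the test variable bounded and only afterwards extending to arbitrary bounded predictable integrands via an $L^2$ approximation; you will need the same two-stage argument. Finally, do not gloss over the verification that the cross-terms arising from integration by parts are true (not merely local) martingales: the paper devotes nontrivial BMO and \'Emery-inequality estimates to precisely this point.
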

\begin{proof} 	
	%    \jm{     	
	%    	WHERE WE ARE USING THIS ? 	
	%    	We will show that $\bE( \wh{O}^\bot _t\vert \F_t)=0$ for every  $ t \geq 0$.
	%    	Fix $t \geq0 $, note that the equality $\bE( \wh{O}^\bot _t\vert \F_t)=0$ is equivalent to $\bE( \wh{O}^\bot _t \eta)=0$ for any $\F_t$-measurable bounded random variable $\eta$.
	%    	Let $\eta_s = \bE ( \eta| \cF_s)$, for any $s\in \bR_+$.
	%    	
	%    	The fact that $\wh{O}^\bot$ is orthogonal to the martingale $(\eta_s,s\geq 0)$, due to B1 and B2,  implies that  $\bE( \wh{O}^\bot_t \eta_t)=\wh{O}^\bot_0 \bE( \eta_0)=0$. Thus, since $\eta_t = \eta$, we obtain   $\bE( \wh{O}^\bot_t \eta) = 0$.
	%    }
	Step 1.
	First we will show that $^{o,\bF} X$ exists and is an $\bF$-special semimartingale. Towards this end it suffices to show that for every $\bF$-stopping time $\tau$ random variable $X_\tau \1_{\tau < \infty }$ is $\sigma$-integrable with respect to $\cF_\tau$. Let us take $\bF$ stopping time $\tau$  and consider increasing sequence of sets $A_n := \set{\tau \wedge n =\tau} \in \cF_{\tau \wedge n}$.
	Note that by \eqref{eq:ass-AG} we have
	\[
	\wh{B}^\bG_t =  \int_0^t \wh{b}^\bG_u a^\bG_u du.
	\]
	Using above formula, $\hat{A1}$ and $\hat{A3}$, i.e. $\bG$-martingale property of $\wh{M}^\bG$, we obtain
	\begin{align*}
	\bE( X_\tau \1_{\tau<\infty} \1_{A_n})
	&=
	\bE( X_{\tau \wedge n} \1_{A_n})
	=
	\bE\Big( (X_0 + \wh{M}^\bG_{\tau \wedge n} + \wh{B}^\bG_{\tau \wedge n})  \1_{A_n}\Big)
	\\
	&=
	\bE\Big( X_0\1_{A_n} + \bE(\wh{M}^\bG_{n} \1_{A_n}| \cG_{\tau \wedge n})  + \1_{A_n} \int_0^{\tau \wedge n} \wh{b}^\bG_u a^\bG_u du  \Big)
	\\
	&=
	\bE\Big( X_0\1_{A_n} + \wh{M}^\bG_{n} \1_{A_n}  + \1_{A_n} \int_0^{\tau \wedge n} \wh{b}^\bG_u a^\bG_u du  \Big)
	\\
	&\leq
	\bE\Big( X_0\1_{A_n} + |\wh{M}^\bG_{n}| \1_{A_n}  + \1_{A_n} \int_0^{n} |\wh{b}^\bG_u| a^\bG_u du  \Big) < \infty.
	\end{align*}
	Thus using \cite[Theorem 5.1]{HeWanYan1992} we conclude that $^{o,\bF}X$ exists. Note that as a by-product of the above estimate  we also get that $X_0$, $\wh{M}^\bG$ and $\wh{B}^\bG$ are $\sigma$-integrable with respect to  $\cF_\tau$ for every $\bF$-stopping time $\tau$ and hence using again \cite[Theorem 5.1]{HeWanYan1992} we conclude that $^{o,\bF}\wh{M}^\bG$ and $^{o,\bF}\wh{B}^\bG$
	exist. By linearity of $\bF$-optional projections and assumptions B3 and $\hat{A2}$, we may now write
	\begin{align}\nonumber
	^{o,\bF} X_t &= \ ^{o,\bF}\!X_0 + ^{o,\bF}\!\wh{M}^\bG_t+^{o,\bF}\! \wh{B}^\bG_t
	\\ \label{eq:important-decompostion}
	&=  X_0 + ^{o,\bF}\!\wh{M}^\bG_t+^{o,\bF}\! \wh{B}^\bG_t  - \int_0^t\,  {^{o,\bF}}\! (\wh{b}^\bG a^\bG)_udu +  \int_0^t\,  {^{o,\bF}} (\wh{b}^\bG a^\bG)_udu,
	\end{align}
	The process  $\wh{M}^B_t={^{o,\bF} \wh{B}^\bG_t}- \int_0^t\,  {^{o,\bF}} (\wh{b}^\bG a^\bG)_udu$  is an  $\bF$-martingale (see Lemma \ref{lem:heroic-result} and Remark \ref{lem:heroic-remark}). Invoking assumptions B1 and B2, which, in fact, imply the immersion between $\bF$ and $\bG$,  and recalling Lemma \ref{lem:heroic-result} again we see that this process is null.
	Hence we conclude that
	\begin{equation}\label{eq:Fspecdec}
	^{o,\bF} X_t =X_0+\,^{o,\bF}\!\wh{M}^\bG_t+\int_0^t \,{^{o,\bF}} (\wh{b}^\bG a^\bG)_s ds
	\end{equation}
	The process $^{o,\bF}\wh{M}^\bG$ is an $\bF$-martingale, since for an arbitrary bounded $\bF$-stopping time $\tau$ we have
	\[
	\bE( ^{o,\bF} \wh{M}^\bG_\tau ) = \bE( \bE ( \wh{M}^\bG_\tau | \cF_\tau  ))
	=
	\bE( \wh{M}^\bG_\tau )
	=
	\bE( \wh{M}^\bG_0 )
	=0.
	\]
	Moreover, the  process $\int_0^\cdot \,{^{o,\bF}} (\wh{b}^\bG a^\bG)_s ds$ is an $\bF$-predictable process with finite variation.
	From this and from \eqref{eq:Fspecdec} we deduce  that the process $^{o,\bF} X$ is an $\bF$-special semimartingale.
	Hence, using again  \eqref{eq:Fspecdec},  by uniqueness of canonical decomposition of $^{o,\bF}\!X_t = X_0+\wh{M}^\bF_t + \wh{B}^\bF_t$, we have
	\begin{equation}\label{eq:MF=FMG}
	\wh{M}^\bF_t =  {^{o,\bF}\!\wh{M}^\bG_t}, \qquad
	\wh{B}^\bF_t = \int_0^t \,{^{o,\bF}} (\wh{b}^\bG a^\bG)_s ds.
	\end{equation}
	
	Step 2. Now we will compute the $\bF$-characteristics of $^{o,\bF} X$.
	
	The formulae \eqref{eq:MF=FMG}, \eqref{eq:def:hatbG} and \eqref{eq:modBF}
	%the assumption that $^{o,\bF} X$ has jumps bounded by $1$
	imply  that the first characteristic of $^{o,\bF} X$, that is $B^\bF $, is given by \eqref{eq:BF}.
	
	Now, since $\wh{M}^\bG$ is square integrable then, invoking the Jensen inequality, we conclude that $\wh{M}^\bF =  {^{o,\bF}\!\wh{M}^\bG}$ is square integrable. Next, invoking the predictable representation property we see that there exists an $\bF$-predictable process $h$ such that $\bE \int_0^t h^2_s d [ Z ]_s < \infty $ and  \begin{equation}\label{eq:MFrep}
	\wh{M}^\bF_t=\int    _0^ t h_sdZ_s .
	\end{equation}
	Now we will compute remaining characteristics of $^{o,\bF}X$ in terms of the process $h$ and in the following step we will compute $h$.
	The continuous martingale part of $^{o,\bF}X$ is thus given as $\int_0^ t h_sdZ^c_s$, where $Z^c$ is the continuous part of the $\bF$-martingale $Z$, so that
	\[
	C^\bF_t= \int_0^t h^2_s d \langle Z^c \rangle_s.
	\]
	Here, $Z^c$ being continuous, $\langle Z^c \rangle=\langle Z^c \rangle^ \bG$.
	To complete this step of  the proof we need to justify \eqref{eq:nuF}. This formula is a consequence of  the fact that $\Delta ^{o,\bF} X_t = h_t \Delta Z_t$,  which entails  that
	the jump measure of $^{o,\bF} X$ is the image of the jump measure of $Z$  under the  mapping $ (t,x) \rightarrow (t, x h_t  \1_{\set{h_t \neq 0}})$, and thus the $\bF$-compensator of $^{o,\bF} X$ is the image of the $\bF$-compensator of $Z$.
	
	Step 3.
	We will now compute  $h$. Towards this end, we fix $t\geq 0$ and we observe using \eqref{eq:MF=FMG} that  for any bounded  $\F_t$-measurable  random variable $\gamma$  we have
	\be \label{eq:carach}
	\bE(\gamma \wh{M}^ \bG_t)=\bE(\gamma \wh{M}^\bF_t).
	\ee
	By using  integration by parts formula we may write the left-hand side of \eqref{eq:carach} as
	\begin{equation}\label{eq:LHS}
	\bE(\gamma \wh{M}^ \bG_t )=\bE \Big( \int_0^t \widehat \gamma_{s-} d \wh{M}^\bG_s  + \int_0^t  \wh{M}^\bG_{s-} k_s d Z_s  + [ \widehat \gamma, \wh{M}^\bG ]_t \Big),
	\end{equation}
	where $(\widehat \gamma_s)_{ s\in[0,t]}$  is the  bounded martingale defined by $ \widehat \gamma_s := \bE (  \gamma | \cF_s)$ which admits the representation
	\begin{equation}\label{gamma}
	\widehat \gamma_s   =  \bE ( \gamma) + \int_0^s k_u d Z_u,\quad s\in[0,t],
	\end{equation} from which $k$ is obtained.
	In view of assumption $\hat{A3}$, we know that the process  $\wh{M}^ \bG$ admits a Kunita-Watanabe decomposition of the form
	\begin{equation}\label{emik}
	\wh{M}^ \bG_t= \wh{M}^ \bG_0 + \int_0^ t H_sdZ_s+\wh{O}^\bot_t,
	\end{equation}
	where $\wh{O}^\bot$ is a square integrable $\bG$-martingale orthogonal to $Z$ satisfying $\wh{O}^\bot_0=0$, and $H$ is a $\bG$-predictable process such that $\int_0^ \cdot H_sdZ_s$ is a square integrable $\bG$-martingale (see e.g. \cite{Sch2001}).
	%In particular, since $Z$ is assumed to be square integrable,  we have
	%\[{d\langle \wh{M}^\bG, Z\rangle^\bG_t} \ll {d\langle Z\rangle^\bG_t},\] and, in particular,
	Hence, since $Z$ is assumed to be square integrable,  we have
	\begin{equation}\label{H} H_t=\frac{d\langle \wh{M}^\bG, Z\rangle^\bG_t}{d\langle Z\rangle^\bG_t}.
	\end{equation}
	Now, let us note that from the representation \eqref{gamma} of $\widehat \gamma$ as stochastic integral with respect to $Z$ and from  \eqref{emik}  we may write
	\[ [ \widehat \gamma, \wh{M}^\bG ]_t = \int_0^t k_s d [Z, \wh{M}^\bG]_s = \int_0^t k_s \Big(H_s d [Z]_s + d [Z, \wh{O}^\bot]_s \Big).
	\]
	Using this we obtain from \eqref{eq:LHS}
	\begin{align}\label{eq:mtgs}
	\bE( \gamma \wh{M}^\bG_t )&= \bE \Big( \int_0^t \widehat \gamma_{s-} d \wh{M}^\bG_s   + \int_0^t  \wh{M}^\bG_{s-} k_s d Z_s + \int_0^t k_s H_s d [Z]_s + \int_0^t
	k_s  d [Z, \wh{O}^\bot]_s
	\Big).
	\end{align}
	Now we prove that the stochastic integrals $ \int_0^\cdot \widehat \gamma_{s-} d \wh{M}^\bG_s$, $\int_0^\cdot  \wh{M}^\bG_{s-} k_s d Z_s$  and $\int_0^\cdot
	k_s  d [Z, \wh{O}^\bot]_s$ in \eqref{eq:mtgs} are $\bG$-martingales on $[0,t]$. The first stochastic integral, i.e., $ \int_0^\cdot \widehat \gamma_{s-} d \wh{M}^\bG_s$, is a $\bG$-martingale since $(\widehat \gamma_{s-})_{ s\in[0,t]}$ is a bounded predictable process. Next, we prove that the local martingale
	\begin{equation}\label{eq:XkdZ}
	\int_0^u  \wh{M}^\bG_{s-} k_s d Z_s, \qquad u \in [0,t],
	\end{equation}
	is a $\bG$-martingale. Using \cite[Lemma 16.2.5]{CohEll2015}, \cite[Theorem 16.2.6]{CohEll2015} and the \`{E}mery inequality for BMO (\cite[Theorem A.8.15.]{CohEll2015}) applied to local martingales (with $p=1$) we obtain
	\[
	\bE (\wh{M}^\bG_{-}  \!\cdot \! ( k  \!\cdot \! Z))_t ^* \leq C \bE (\wh{M}^\bG)^*_t \ \norm{ ( k  \!\cdot \! Z) }_{BMO} .
	\]
	From the Doob maximal inequality, we obtain that $\bE (\wh{M}^\bG)^*_t < \infty$. Next, since $\widehat \gamma$ is bounded, using \cite[Remark A.8.3.]{CohEll2015} we see that $\norm{ ( k  \!\cdot \! Z) }_{BMO} < \infty$. Therefore the local martingale given by \eqref{eq:XkdZ} is a martingale.
	Finally, we consider
	\begin{equation}\label{eq:kdZM}
	\int_0^u
	k_s  d [Z, \wh{O}^\bot]_s , \qquad u \in [0,t].
	\end{equation}
	Using Kunita-Watanabe's inequality and   Cauchy-Schwartz's inequality we obtain that
	\[
	\bE (k  \!\cdot \!  [Z, \wh{O}^\bot])^*_t \leq
	\bE \int_0^t |k_s|  |d[Z, \wh{O}^\bot]_s | \leq 	\left( \bE \int_0^t k^2_s  d [Z]_s \right)^{1/2}\left(\bE  [\wh{O}^\bot]_t\right)^{1/2} < +\infty.
	\]	
	So   the process given by \eqref{eq:kdZM}  is a martingale.
	Consequently, since the processes $ \int_0^\cdot \widehat \gamma_{s-} d \wh{M}^\bG_s$, $\int_0^\cdot  \wh{M}^\bG_{s-} k_s d Z_s$  and $\int_0^\cdot
	k_s  d [Z, \wh{O}^\bot]_s$ in \eqref{eq:mtgs}
	are $\bG$-martingales on $[0,t]$, the left hand side of  \eqref{eq:carach} takes the form
	\begin{equation}\label{eq:LHS-j}
	\bE( \gamma \wh{M}^ \bG_t)
	= \bE \Big( \int_0^t k_s H_s d [Z]_s \Big).
	\end{equation}
	Now we deal with the right-hand side of \eqref{eq:carach}. Invoking \eqref{eq:MF=FMG} and  \eqref{gamma}, and using  integration by parts formula,  we may write the right-hand side of \eqref{eq:carach} as
	\begin{equation}\label{eq:RHS}
	\bE( \gamma \wh{M}^\bF_t )=\bE( \gamma\, ^{o,\bF}\!\wh{M}^\bG_t )= \bE \Big( \int_0^t \widehat \gamma_{s-} d ^{o,\bF}\!\wh{M}^\bG_s  + \int_0^t  {^{o,\bF}\!}\wh{M}^\bG_{s-} k_s d Z_s  + [ \widehat \gamma, ^{o,\bF}\!\wh{M}^\bG ]_t \Big).
	\end{equation}
	Next, let us note that from \eqref{eq:MF=FMG}, \eqref{gamma} and \eqref{eq:MFrep} we get
	\[ [ \widehat \gamma, {^{o,\bF}\!\wh{M}^\bG }]_t = \int_0^t k_s d [Z, \wh{M}^\bF]_s = \int_0^t k_s h_s d [Z]_s.
	\]
	Using this and \eqref{eq:RHS} we obtain
	\begin{align}\label{eq:mtgs-j}
	\bE( \gamma \wh{M}^\bF_t )
	%\sout{=\bE(\widehat \gamma\, ^{o,\bF}\!X_t )}
	= \bE \Big( \int_0^t \widehat \gamma_{s-} d \wh{M}^\bF_s  + \int_0^t   \wh{M}^\bF_{s-} k_s d Z_s + \int_0^t k_s h_s d [Z]_s \Big).
	\end{align}
	Applying reasoning analogous to the one that led to \eqref{eq:LHS-j}, and invoking \eqref{eq:mtgs-j}
	we conclude that
	\begin{equation}\label{eq:RHS1}
	\bE( \gamma \wh{M}^ \bF_t )=\bE \Big(  \int_0^t k_s h_s d [Z]_s \Big).
	\end{equation}
	Putting together \eqref{eq:carach}, \eqref{eq:LHS-j} and \eqref{eq:RHS1}, we see that (\ref{eq:carach}) is equivalent to
	\begin{equation}\label{k}
	\bE\left ( \int _0^t H_sk_s d[Z]_s\right ) =\bE\left ( \int _0^t h_sk_s d[Z]_s\right )
	\end{equation}
	for any $k$ which is $\bF$-predictable and such that $\int_0^t k_sdZ_s$ is bounded.
	
	We will now show that \eqref{k} extends to any $\bF$-predictable and bounded $k$, a result that we will need in what follows.
	Towards this end let us take an arbitrary predictable and bounded $k$ and define a  square integrable random variable $\psi $ by
	\[
	\psi := \int_0^t k_s d Z_s .
	\]
	The random variable	$\psi $ is a (point-wise) limit of the sequence $\psi_n := \psi \wedge n$ of bounded random variables and hence $\bE \psi_n \rightarrow \bE \psi = 0$.
	Moreover, for each $n$ we have the predictable representation $\psi_n = \bE (\psi_n)  + \int_0^t k^n_s d Z_s$, and thus
	\begin{align*}
	\bE \Big( \int_0^t (k^n_s - k_s)^2  d[Z]_s \Big) &= \bE \Big( \int_0^t k^n_s d Z_s - \int_0^t k_s d Z_s\Big)^2 = \bE(( \psi_n - \psi - \bE (\psi_n)) ^2) \\
	& \leq 2 \bE (( \psi_n - \psi )^2) + 2 \left(\bE (\psi_n)\right )^2  \mathop{\longrightarrow}_{ n \rightarrow \infty }0 .
	\end{align*}
	Using this and the Kunita-Watanabe inequality we obtain
	\[
	\bE\left (  \int _0^t |H_s(k^n_s - k_s )| d[Z]_s \right ) \leq  \left( \bE\left(  \int _0^t |H_s|^2 d[Z]_s  \right ) \right)^{\frac{1}{2}}  \bigg( \bE\left (  \int _0^t |k^n_s - k_s |^2 d[Z]_s \right ) \bigg)^{\frac{1}{2}}		 \mathop{\longrightarrow}_{ n \rightarrow \infty }0
	\]
	and
	\[
	\bE\left (  \int _0^t |h_s(k^n_s - k_s )| d[Z]_s \right ) \leq  \bigg( \bE\left (  \int _0^t |h_s|^2 d[Z]_s \right ) \bigg)^{\frac{1}{2}}  \bigg( \bE\left (  \int _0^t |k^n_s - k_s |^2 d[Z]_s \right ) \bigg)^{\frac{1}{2}}		 \mathop{\longrightarrow}_{ n \rightarrow \infty }0.	 \]
	Using  these two facts and \eqref{k} for $k^n$, we can pass to the limit in  \eqref{k} and obtain that \eqref{k}  holds for any bounded $k$.
	
	Recall that $[Z]-\langle Z\rangle $ is a $\bG$--martingale (cf. Remark \ref{rem:immersion}). Thus, using the Kunita-Watanabe inequality  we obtain that $(\int_0^\cdot H_sk_s d  ([Z]_s - \left< Z \right>_s))$ is a $\bG$--martingale and hence
	\[
	\bE\left ( \int _0^t H_sk_s d  [Z]_s \right) = \bE\left ( \int _0^t H_sk_s
	d\left< Z \right>_s\right ) .
	\]
	Similarly
	\[
	\bE\left ( \int _0^t h_sk_s d  [Z]_s \right) = \bE\left ( \int _0^t h_sk_s
	d \left< Z \right>_s\right ) .
	\]
	Hence and  from \eqref{k}, we have
	\begin{equation*}
	\bE\left ( \int _0^t (H_s - h_s)k_s d \left< Z \right>_s\right ) = 0.
	\end{equation*}
	This implies, by using assumption B.4, \cite[Theorem 5.16]{HeWanYan1992} and the remark right below it, that
	\begin{equation*}
	\bE\left ( \int _0^t ( ^{p,\bF} H_s - h_s)k_s d \left< Z \right>_s\right ) = 0
	\end{equation*}
	for any bounded $\bF$--predictable $k$ such that $(H_t - h_t)k_t\geq 0$ for $t\geq 0$. Hence, we have \[\int_{\Omega \times \bR_+} \1_{(0,t]}(s)( ^{p,\bF} H_s(\omega) - h_s(\omega))k_s(\omega)d\lambda_{\langle Z\rangle}(\omega,s)
	=0\] for any bounded $\bF$--predictable $k$ such that $(H_t - h_t)k_t\geq 0$ for $t\geq 0$. Since, by convention, semi-martingales are right continuous, then the measure  $\lambda_{\langle Z\rangle}$ does not charge any set of the form $B\times \{0\}$. Consequently, we conclude that \[\int_{\Omega \times \bR_+} \1_{[0,t]}(s)( ^{p,\bF} H_s(\omega) - h_s(\omega))k_s(\omega)d\lambda_{\langle Z\rangle}(\omega,s) = 0\] for any bounded $\bF$--predictable $k$ such that $(H_t - h_t)k_t\geq 0$ for $t\geq 0$.
	
	Now, note that $H_s-h_s >0$ if and only if $^{p,\bF}H_s-h_s>0$.
	Thus, first taking
	\[
	k_s = \1_{\set{ ( H_s-h_s)  > 0}} =  \1_{\set{ ({^{p,\bF}H_s} -h_s)  > 0}},
	\]
	and then taking
	\[
	k_s = -\1_{\set{ (H_s-h_s)  < 0}} = -\1_{\set{ ({^{p,\bF}H_s} -h_s)  < 0}}
	\]
	in the above, we obtain
	\begin{equation}\label{eq:H-to-h}
	h_s=\ ^{p,\bF}\!H_s \qquad  \lambda_{\left< Z \right>}- \text{  a.e.   on } \Omega \times  [0,t].
	\end{equation}
	% \tb{quite possibly on the support of $<Z^c>$?}
	This, together with formula \eqref{H} gives \eqref{eq:h}.
	The proof is complete.
\end{proof}

\section{Examples}   \label{sec:examples}
%%%%%%%%%%%%%%%%%%%%%%%%%%%%%%%%%%%%%%%%%%%%%%%%%%%%%%%%%%%%%%%%%%%%%%%%%%%%%%%%%%%%%%%%%%%%%%
%%%%%%%%%%%%%%%%%%%%%%%%%%%%%%%%%%%%%%%%%%%%%%%%%%%%%%%%%%%%%%%%%%%%%%%%%%%%%%%%%%%%%%%%%%%%%%

\def \ind{1\!\!1}
\def \E{\mathbb E}

%\mr{Check whether assumptions of Theorems are satisfied !!! }
Examples \ref{vopp}--\ref{ex:BivDiff} below illustrate the results in the case when $X$ is $\bF$-adapted. In what follows the natural filtration of any  process $A$ is denoted by $\bF^A$.

\begin{example} \label{vopp}

	Consider two one-point c\`{a}dl\`{a}g  processes $Y^1$ and $Y^2$ on a probability space $(\Omega,\F,\bP)$, and let  $Y=(Y^1,Y^2)$.  That is,  $Y^i$ $(i=1,2)$  starts from $0$ at time $t=0$ and jumps to $1$ at some random time.  Thus, $Y$ can be identified with a pair of positive random variables $T_1$ and
	$T_2$ given by $T_i := \inf\{t>0:\ Y^i_t =1\}$, $i=1,2$. In other words, $Y^i_t=\1_{\{T_i \leq t\}}$, $i=1,2$. We assume that, under $\bP$, the probability distribution of $(T_1,T_2)$ admits a density function $f(u,v)$ which is continuous in both variables.
	
	Now, let $X=Y^1$, $\bF=\bF^{X}$  and $\bG=\bF^{Y}$. Clearly, $X$ is a special $\bG$-semimartingale and a special $\bF$-semimartingale on $(\Omega,\F,\bP)$.
	
	The $\bG$-characteristics of $X$ are $(B^\bG,0,\nu^\bG)$, where
	\beq
	B^\bG_t=\int_0^t\, \kappa_sds,\quad \nu^\bG(ds,dx)=\delta_{1}(dx) \kappa_s \,ds,\eeq
	$\delta_1$ is the Dirac measure at $1$, and $\kappa$ is given by (this result follows, for example, by application of \cite[Theorem 4.1.11]{LasBra1995})
	\[
	\kappa_s = \frac{\int_s^{\infty}f(s,v)\,dv}{\int_s^\infty
		\int_s^\infty f(u,v)\,du\,dv}\1_{\{s \leq T_1 \wedge T_2\}} +
	\frac{f(s,T_2)}{\int_s^{\infty} f(u,T_2)\,du}\1_{\{T_2<s\leq
		T_1\}}, \quad s\geq 0.
	\]

	Thus, according to Theorem \ref{adapted}, the $\bF$-characteristics of $X$ are $(B^\bF,0,\nu^\bF)$, where
	\[B^\bF_t=\int_0^t{^{o,\bF}}(\kappa)_sds,\quad \, \nu^{\bF}=(\nu^\bG)^{p,\bF}.\]
	Now, we will provide  explicit formulae for $B^\bF$ and $\nu^{\bF}$; for the latter, we only need to compute $\nu^{\bF}(dt,\{1\})$.
	It can be easily shown that these computations  boil down to computing the $\bF$-optional projection of the process $\kappa$. Indeed, for an arbitrary $\bF$-predictable, bounded function $W$ on $\Omega \times \bR$ we have
	\begin{align*}
	&\bE\Big( \int_{\bR_+ \times \bR } W(s,x) \nu^{\bG} (ds, dx)\Big)
	=
	\bE\Big(\int_{\bR_+ } W(s,1) \kappa_s ds\Big)
	=
	\bE\Big(\int_{\bR_+ } {^{p,\bF}}(W(\cdot , 1) \kappa_\cdot)_s ds\Big) \\
	&=
	\bE\Big(\int_{\bR_+ }  {^{p,\bF}}(\kappa)_s W(s , 1) ds\Big) =
	\bE\Big(\int_{\bR_+ \times \bR } {^{p,\bF}}(\kappa)_s W(s , x) \delta_1(dx) ds\Big),
	\end{align*}
	where ${^{p,\bF}}(\kappa)$  denotes the  $\bF$-predictable projection of $\kappa$. Next, we note that the measure $\rho$ defined as
	\[
	\rho (dt, dx) := {^{p,\bF}}(\kappa)_t\delta_1(dx)  dt
	\]
	is $\bF$-predictable, and thus, due to uniqueness of the dual predictable projections, we have  $\rho = (\nu^\bG)^{p, \bF}$, and so $\nu^\bF = \delta_1(dx) \, {^{p,\bF}}(\kappa)$.
	Finally, we note that, in view of the continuity assumptions on $f$ and that fact that   $\kappa$ admits two jumps only, we have
	\[
	\bE\Big(\int_{\bR_+ \times \bR } {^{p,\bF}}(\kappa)_s W(s , x) \delta_1(dx) ds \Big)=\bE\Big(\int_{\bR_+ \times \bR } {^{o,\bF}}(\kappa)_s W(s , x) \delta_1(dx)  ds\Big),
	\]
	where ${^{o,\bF}}(\kappa)$  denotes the  $\bF$-optional projection of $\kappa$.
	Using  the key lemma (see e.g. \cite[Lemma 2.9]{AksJea2017}) we obtain
	\beq
	{^{o,\bF}}(\kappa)_s
	&=&\E\left(\frac{\int_s^{\infty}f(s,v)\,dv}{\int_s^\infty
		\int_s^\infty f(u,v)\,du\,dv}\1_{\{s \leq T_1 \wedge
		T_2\}}+
	\frac{f(s,T_2)}{\int_s^{\infty}
		f(u,T_2)\,du}\1_{\{T_2<s\leq
		T_1\}}\bigg|\F_{s}\right)
	\\
	& =&\frac{\int_0^{\infty}f(s,v)\,dv}{\int_s^\infty
		\int_0^\infty f(u,v)\,du\,dv} \1_{\set{ T_1 > s }} .
	\eeq
	Consequently,
	\[B^\bF_t=\int_0^t  \frac{\int_0^{\infty}f(s,v)\,dv}{\int_s^\infty
		\int_0^\infty f(u,v)\,du\,dv} \1_{\set{ T_1 > s }} ds \]
	and  ${\nu}^\bF((0,t],\{1\})$ is given as
	\beq
	&&{\nu}^\bF((0,t],\{1\})=
	\int_0^t \frac{\int_0^{\infty}f(s,v)\,dv}{\int_s^\infty
		\int_0^\infty f(u,v)\,du\,dv} \1_{\set{ T_1 > s }} ds.
	\eeq
	We note that the last result agrees with the classical computation of intensity of $T_1$ in its own filtration, which is given as $\lambda^1 _s= \frac{f^1(s)}{1-F^1(s)}$ with $F^1(s)=\bP(T_1\leq s)$ and $f^1(s)=\frac{\partial F^1(s)}{\partial s}$.
	\qed
\end{example}

\begin{example}\label{diff}
	Let $X$ be a real-valued process on $(\Omega,\cF,\bP)$ satisfying
	\[dX_t=m_tdt +\sum_{j=1}^2 \sigma^{j}_tdW^j_t +  dM_t,\quad t\geq 0,\]
	where $W^j$s are independent standard Brownian motions (SBMs), and $M_t=\int_0^t \int_\bR x(\mu(ds,dx)-\nu(ds,dx))$ is a pure jump martingale, with absolutely continuous compensating part, say $\nu(dx,dt)=\eta(t,dx)dt$. We assume that $M$  is independent of $W^j$s. The coefficients $m$ and $\sigma^j>0,\ j=1,2$ are adapted to $\bG:=\bF^{W^1,W^2,M }$ and bounded.
	
	\noindent
	Let    $\bF=\bF^{X}$.  Since $M$ and  $\sigma ^1\cdot W^1+\sigma^2\cdot W^2$ are true $\bG$-martingales,  then $X$ is a special semimartingale in $\bG$ and thus in $\bF$.
	
	The $\bG$-characteristics of $X$ are
	\[
	B^\bG_t= \int_0^t  m_sds, \quad C^\bG_t=\int_0^t((\sigma^1_s)^2+(\sigma^2_s)^2)ds,\quad \nu^\bG(dx,dt)=  \eta(t,dx) dt.
	\]
	Now, in view of Theorem \ref{adapted},  we conclude that the $\bF$-characteristics of $X$ are
	\[
	B^\bF_t=\int_0^t {^{o,\bF}}(m)_sds  , \quad C^\bF_t=\int_0^t((\sigma^1_s)^2+(\sigma^2_s)^2)ds,\quad \nu^\bF(dx,dt)= (\eta(t,dx) dt)^{p,\bF}. \qquad \qed
	\]
	%\tb{Perhaps derive the canonical decomposition of $X$ in $\bF$??? In other words what is the form $dX=m^\bF dt + dL^\bF$, where $L^\bF$ is a $\bF$ mtg?}
\end{example}

\begin{example}\label{ex:PP}
	In this example we consider  time homogeneous Poisson process with values in $\r^2$.
	There is a one-to-one correspondence between any time homogeneous Poisson process with values in $\r^2$, say  $N=(N^1,N^2)$, and a homogeneous Poisson measure, say $\mu$, on $E:=\{0, 1\}^2 \setminus \{(0,0)\}$.\footnote{We refer to \cite{JacShi2002} for the definition of the Poisson measure.} See for instance discussion in \cite{BieJakVidVid2008}.
	
	Let $\bG=\bF^N$, and let $\nu$ denote the
	$\bG$-dual predictable projection of  $\mu$.  The measure $\nu$ is a
	measure on a finite set, so it is uniquely determined by its values
	on the atoms in $E$. Therefore the Poisson process $N=(N^1,N^2)$ is
	uniquely determined by
	\be\lab{pmeas}
	\nu(dt,\{1,0\})=\lambda_{10}%\rcancel{\delta_{(1,0)}(dx)}
	dt,\quad
	\nu(dt,\{0,1\})=\lambda_{01}%\rcancel{\delta_{(0,1)}(dx)}
	dt,\quad
	\nu(dt,\{1,1\})=\lambda_{11}%\rcancel{\delta_{(1,1)}(dx)}
	dt\ee
	for
	some positive constants $\lambda_{10}$, $\lambda_{01}$ and
	$\lambda_{11}$. Clearly, the Poisson process $N=(N^1,N^2)$  is a $\bG$-special semimartingale, and the $\bG$-characteristic triple of $N$  is $(B,0,\nu)$, where
	
	\[B_t={(\lambda_{10}+\lambda_{00})t \atopwithdelims []
		(\lambda_{01}+\lambda_{00})t}.\]
	
	Let $X=N^1$. Then, $X$ is a $\bG$-special semimartingale, and the $\bG$-characteristic triple of $X$  is $(B^\bG,0,\nu^\bG)$, where
	\beq \nu^\bG(dt,\set{1})&=&\nu(dt,\{(1,0)\}) +
	\nu(dt,\{(1,1)\}) =\lambda_{10}dt + \lambda_{11}dt, \qquad \nu^\bG(dt,\set{0}) = 0,
	\eeq  and  $B^\bG_t=(\lambda_{10}+\lambda_{00})t$.
	
	Now, let us set $\bF=\bF^{X}$. In view of Proposition \ref{prop:det-char} we have
	\[(B^\bF,0,\nu^\bF)=(B^\bG,0,\nu^\bG).\]
	\qed
\end{example}

\begin{example}\label{ex:BivDiff}
	
	Let $Y=(Y^1,Y^2)^\top$ be
	given as the strong solution of the SDE
	\be\lab{joint}
	dY_t=m(Y_t)dt +\Sigma (Y_t)dW_t,\quad Y(0)=(1,1)^\top,\,
	\ee
	\noindent where $W=(W_1,W_2)^\top$ is a two dimensional SBM process on  $(\Omega,\F,\bP)$, and where $$m(y^1,y^2)=(m_1(y^1,y^2),m_2(y^1,y^2))^\top, \quad  \Sigma(y^1,y^2)=\left(
	\begin{array}{cc}
	\sigma_{11}(y^1,y^2) & \sigma_{12}(y^1,y^2) \\
	\sigma_{21}(y^1,y^2) & \sigma_{22}(y^1,y^2) \\
	\end{array}
	\right)$$
	are bounded.
	Next, let us set {$\bG=\bF^Y$},  $X=Y^1$ and  $\bF=\bF^X$. Hence
	
	\begin{equation}\label{eq:Xdyn}
	dX_t=m_1(X_t, Y^2_t)dt +\sigma_{1,1} (X_t, Y^2_t)dW^1_t + \sigma_{1,2} (X_t, Y^2_t)dW^2_t
	\end{equation}
	
	\noindent
	Suppose that the function $\Sigma $ satisfies the following  condition
	\[
	\sigma ^2_{11}(y^1,y^2)+\sigma ^2_{12}(y^1,y^2)
	=\sigma^2_1(y^1),\quad (y^1, y^2) \in \bR^2,
	\]
	for some function $\sigma_1 >0$, and
	suppose that the function $m_1$
	satisfies
	\[
	m_1(y^1,y^2)=\mu_1(y^1) \quad (y^1, y^2) \in \bR^2.
	\]
	Then \eqref{eq:Xdyn}  takes form
	\begin{equation*}
	dX_t=\mu_1(X_t)dt +\sigma_{1} (X_t)dZ_t, \qquad X(0) =1,
	\end{equation*}
	where
	\[
	Z_t = \int_0^t \frac{\sigma_{1,1} (X_s, Y^2_s) }{\sigma_1(X_s)} d W^1_s + \int_0^t  \frac{\sigma_{1,2} (X_s, Y^2_s) }{\sigma_1(X_s)} d W^2_s
	\]
	is a $\bG$-adapted process, which is a continuous $\bG$-local martingale. Since $(Z^2(t) - t)_{t \geq 0}$  is a local martingale we obtain by L\'evy's characterization theorem that $Z$ is a standard Brownian motion in the filtration $\bG$. Thus using continuity of paths of $X$ we conclude that $X$
	has the  $\bG$-characteristic triple given as $(B^\bG,C^\bG,0)$, where
	\[
	{B^\bG_t}=\int_0^t \mu_1(X_u) du,\quad  {C^\bG_t}=\int_0^t \sigma^2_1(X_u) du, \quad t\geq 0.
	\]
	We will now apply Theorem \ref{adapted} so to compute the $\bF$-characteristics of $X$. Since $X$ is $\bF$-adapted the $\bF$-characteristics of $X$ are
	\[
	{B^\bF_t}=\int_0^t
	{^{o,\bF}}(\mu_1(X_u) ) du = \int_0^t \mu_1(X_u)  du
	,
	\quad
	{C^\bF_t} = {C^\bG_t}=\int_0^t \sigma^2_1(X_u) du
	\]
	Finally by continuity of paths
	\[
	\nu^{\bF}(dt,dx) = (\nu^{\bG}(dt,dx))^{p,\bF} = (0)^{p,\bF}  	= 0 .
	\]
	So we conclude that  $( B^\bG, C^\bG,0)=(B^\bF,C^\bF,0)$.
	\qed
\end{example}

The remaining examples refer to the case when $X$ is not $\bF$-adapted. We begin with providing, in Remark \ref{rem:4.1},  a sufficient condition for the $\bF$-optional and the $\bF$-predictable projections of a $\bG$-adapted process $Y$ to exist. We will only use this condition to deduce existence of $\bF$-predictable projections though.
\begin{remark}\label{rem:4.1}
	The $\bF$-optional and $\bF$-predictable projections of a $\bG$-adapted process $Y$ exist  under the condition that $\bE (Y^*_t) < \infty$ for all $t>0$, where $Y^*_t= \sup _{s\leq t} |Y_s |$.
	Indeed, taking an  arbitrary $\bF$ stopping time and $A_n= \set{\tau \leq n},\ n=1,2,\ldots  $ we have $ A_n  \in \cF_{\tau-}\subseteq \cF_{\tau}$
	and
	\[
	\bE(Y_\tau\1_{\tau < \infty}\1_{A_n})
	\leq  \bE(Y^*_n 1_{\tau < \infty}\1_{A_n})  \leq  \bE(Y^*_n ) <\infty,
	\]	
	for each $n=1,2,\ldots  $.  Hence by \cite[Theorem 5.1]{HeWanYan1992} the $\bF$-optional projection of $Y$ exists.  Taking $\tau$ to be an $\bF$-predictable stopping time we have by \cite[Theorem 5.2]{HeWanYan1992} that the $\bF$-predictable projection of $Y$ exists.
\end{remark}

\begin{example}\label{ex:Privault}
	Let  $(\Omega,\F,\bP)$ be the underlying probability space supporting a Brownian motion $W$ and an independent time inhomogeneous Poisson process $N$. Let $\gg$ be the filtration generated by $W$ and $N$.
	Suppose  that $N$ has  deterministic compensator
	$\nu(t) = \int_0^t \lambda (s) ds$, $t\in\r_+$, so that $(\nu(t))_{t\in\r_+}$ is the unique continuous deterministic
	function such that
	\[
	M_t:=N_{t}-\nu(t), \quad t \geq 0,
	\] is an $(\bF^N,\bP)$-martingale.
	Additionally, suppose that $\lambda$ is such that  $\lim_{t\rightarrow \infty}
	\nu(t) = \infty$ and $\nu(t) <\infty$, $t\geq 0$.
	Finally, note that by independence of  $W$ and $N$ under $\bP$, $M$ is also a $(\gg,\bP)$-martingale.
	
	Let now  $X$ be a process with  the following integral  representation
	%	be a square integrable $\gg$-martingale. Since $\bG$ is the filtration generated by $W$ and $N$, the martingale $X$ can be represented in the  form
	\begin{equation}\label{eq:exampledefX}
	X_t =X_0+\int_0^t \beta_s ds +  \int_0^t \gamma_s dW_s + \int_0^t \kappa_s \1_{|\kappa_s| \leq 1} d M_s + \int_0^t \kappa_s \1_{|\kappa_s| > 1} d N_s,
	\end{equation}
	for some $\bG$-predictable processes $\gamma $ and $\kappa$
	such that  for all $t\geq 0$
	\begin{equation}\label{eq:intc1}
	\bE \Big( \sup_{s \leq t}\Big(|\beta_s|
	+|\kappa_s \lambda(s)|\1_{|\kappa_s| > 1} \Big) \Big) < \infty,
	\end{equation}
	\begin{equation}\label{eq:intc3}
	\bE \Big( \int_0^t \big( \gamma_s^2 + \kappa_s^2 \lambda(s) \big) ds \Big) < \infty.
	\end{equation}
	Note that $L_t=\int_0^t \gamma_s dW_s + \int_0^t \kappa_s \1_{|\kappa_s| \leq 1} d M_s$ and $A_t=\int_0^t \beta_s ds+\int_0^t \kappa_s \1_{|\kappa_s| > 1} d N_s,\ t\geq 0$ are a $\gg$-adapted local-martingale and $\gg$-adapted process with locally integrable variation, respectively. Thus for $A^\bG_t = t$ the process $X$ is a special $\gg$-semimartingale  with $\bG$-characteristics $(B^\bG,C^\bG,\nu^\bG)$, where
	\[
	B^{\bG}_\cdot =\int_0^\cdot  \beta_t  dt,
	\]
	\[C^\bG_\cdot=\int_0^\cdot \gamma^2_tdt,\]
	and
	\[
	\nu^\bG(A,dt) = \Big(\int_\bR \1_{A \setminus \set{0}}(x) \delta_{\kappa_t}(dx) \Big)\lambda(t)dt, \qquad A \in \mathcal{B} (\bR).
	\]
	In particular, we have (cf. \eqref{data})
	\[
	a^\bG_t = 1, \quad b^\bG_t = \beta_t, \quad c^\bG_t = \gamma_t^2, \quad K^\bG_t(dx) =  \delta_{\kappa_t}(dx)  \1_{\kappa_t \neq 0 } \lambda(t).
	\]
	Moreover, $X$ as a special $\bG$-semimartingale has the unique canonical decomposition
	\[
	X  = X_0 + \wh{B}^\bG + \wh{M}^\bG,
	\]
	where
	\begin{equation}\label{eq:BGMG}
	\wh{B}^\bG_t = \int_0^t (\beta_s + \1_{|\kappa_s| > 1}\kappa_s \lambda(s) )ds \quad \text{ and } \quad \wh{M}^\bG_t = \int_0^t \gamma_s dW_s + \int_0^t \kappa_s d M_s.
	\end{equation}
	Now, we will verify that for arbitrary $\bF \subseteq \bG$ assumptions
	$\hat{\textrm{A1}}$ -- $\hat{\textrm{A3}}$ are satisfied.
	Since $a^\bG=1$ and
	\[
	\wh{b}^\bG = \beta + \1_{|\kappa| > 1}\kappa \lambda,
	\]
	from \eqref{eq:intc1} it immediately follows that $\hat{\textrm{A1}}$ holds. Note that
	\eqref{eq:intc1}  also implies $\sigma$-integrablilty of $\wh{b}^\bG a^\bG$ with respect to $\cF_\tau$ for every bounded $\bF$-stopping time $\tau$, so $\hat{\textrm{A2}}$ is satisfied.
	Using \eqref{eq:intc3} we see that
	$\wh{M}^\bG$ is a square integrable $\bG$-martingale and hence $\hat{\textrm{A3}}$ follows.
	
	\medskip
	In what follows we will
	give the form of characteristics of $^{o,\bF}\!X$ for different specifications of $\bF$ and $Z$.
	
	\medskip
	
	\noindent a) Let $\phi : \r_+\longrightarrow \r$ and
	$\alpha : \r_+\longrightarrow (0,\infty)$ be two  deterministic functions, with $\int_0^t \alpha^2 (s) ds < \infty, \ t \geq 0$.
	Let $i (t) = \1_{\{\phi(t) = 0\}}$,  and set
	$$\lambda(t)
	%= (1-i(t)) \alpha^2(t)/\phi^2(t)
	=
	\left\{ \begin{array}{ll}
	\alpha^2(t)/\phi^2(t)& \mbox{if} \ \phi(t)\not=0, \\
	0 & \mbox{if} \ \phi(t)=0, \ \ \
	\end{array} \right.
	t \geq 0.
	$$
	By Proposition 4 in \cite{EMM1989} the process $V$ given by
	\begin{equation}\label{eq:SE-sol}
	d V_t = i(t) d W_t + \frac{\phi(t)}{\alpha(t)}(d N_t - \lambda(t) dt ),\quad t\geq 0, \quad V_0 = 0,
	\end{equation}
	is the unique strong solution of the following structure equation %\eqref{eq:SE}.
	\begin{equation}\label{eq:SE}
	d [V]_t = dt + \frac{\phi(t)}{\alpha(t)} d V_t,\quad t\geq 0, \quad V_0 = 0.
	\end{equation}
	By Proposition 3 ii) in  \cite{EMM1989}  the process $V$ has the predictable representation property in $\bF^V$.
	
	We now take $\bF = \bF^V\subseteq \bG$ and $Z$ given as \[ Z_t =\int_0^t\alpha(s) d V_s,\ t\geq 0. \]
	Thus, following \cite{jp:cm}, we see that $Z$ satisfies
	\begin{equation}
	\label{e5}
	dZ_{t} = i(t) \alpha(t) dW_{t}
	+ \phi(t) \left( dN_{t} - \lambda(t)dt\right),
	\ \ \ t\geq 0, \ Z_0=0.
	\end{equation}
	Thus, using the fact that
	\[
	\phi(t)dZ_t= \phi^2(t)(dN_t-\lambda(t) dt) \quad  \text{and} \quad  i^2(t)\alpha^2(t)+ \phi^2(t)\lambda(t)= \alpha^2(t),
	\] we conclude that
	\begin{equation}\label{eq:d[Z]}
	d[Z]_t = i^2(t) \alpha^2(t) d{t} + \phi^2(t)dN_t = \alpha^2(t) dt +  \phi(t) d Z_t,\quad t\geq 0, \ Z_0=0.
	\end{equation}
	The process $Z$ is obviously a square integrable $(\gg,\bP)$-martingale  and from \eqref{eq:d[Z]} we see that $\langle Z\rangle^\bG_t = \int_0^t \alpha^2 (s) ds$.
	Since $\ff \subset \gg$,    $Z$ is  a square integrable $(\ff,\bP)$-martingale.
	Clearly, $Z$ has  predictable representation property in $\bF$ since $\alpha >0$ and $V$ has predictable representation property in $\bF^V$.
	Therefore for such $Z$ we see that conditions B1-B2 are satisfied.
	By definition of $\bG$ we have that $\cG_0 $ is trivial, so   B3 holds. 	Moreover, we additionally assume that, for $t\geq 0$,
	\begin{equation}\label{eq:intc2}
	\bE \Big( \sup_{s \leq t} \Big( \frac{|\gamma_s| }{\alpha(s)} i(s) + (1 - i(s)) \Big|\frac{\kappa_s}{\phi(s)}\Big|\Big) \Big)
	< \infty.
	\end{equation}
	From \eqref{e5}  and  \eqref{eq:BGMG} we have
	\begin{equation*}
	H_t := \frac{d \langle \wh{M}^\bG, Z \rangle_t }{ d \langle Z \rangle_t } = \frac{\gamma_t i(t) \alpha(t) + \kappa_t \phi(t) \lambda(t) }{i^2(t) \alpha^2(t) + \phi^2(t) \lambda(t)}	=\frac{\gamma_t }{\alpha(t)} i(t) + (1 - i(t)) \frac{\kappa_t}{\phi(t)}\quad dt \otimes d \bP\ a.e.
	\end{equation*}
	Hence, assumption \eqref{eq:intc2} and Remark \ref{rem:4.1} imply that the predictable projection of the process ${H}$ exists,  so B4 holds.

	Now, using Theorem \ref{not-adapted} we obtain that  $^{o,\bF}\!X$ is a special semimartingale whose characteristics are expressed in terms of $h$ given by \eqref{eq:h}. We will now proceed with computation of $h$.
	Since $d\langle Z\rangle_t= \alpha^2(t) dt$, $\alpha > 0$,  we see from \eqref{eq:h} that $h_t={^{p,\bF}\!H_t}$ for $t\geq 0$ outside of an evanescent set, so that $h_t = \bE( H_t | \cF_{t-})$ for $t>0$ outside of an evanescent set. Thus we may write
	\[
	h_t = \frac{\bE( \gamma_t | \cF_{t-}) }{\alpha(t)} i(t) + (1 - i(t)) \frac{\bE( \kappa_t | \cF_{t-})}{\phi(t)} %\quad dt \otimes d \bP\ a.e.
	.
	\]
	In view of Theorem \ref{not-adapted} again,
	having the above form of $h$, we find the $\bF$-characteristics of $^{o,\bF}\!X$. Since  $d\langle Z^c\rangle_t= i^2(t) \alpha^2(t) dt$, the $\ff$-characteristics of $^{o,\bF}\!X$ are
	\begin{equation*}%\label{eq:CF-lemma}
	C^\bF = \int_0^\cdot h^2_s i^2(s) \alpha^2(s)ds = \int_0^\cdot (\bE( \gamma_s | \cF_{s-}))^2 i(s)ds,\ t\geq 0,
	\end{equation*}
	and, for any $ A \in \mathcal{B} (\bR)$,
	\begin{align*}%\label{eq:nuF-lemma}
	\nu^\bF(A,dt) &= \Big(\int_\bR \1_{A \setminus \set{0}}(h_t x)\delta_{\phi(t)}(dx)\Big)\lambda(t)dt
	\\
	&=\Big(\int_\bR \1_{A \setminus \set{0}}(\bE ( H_t | \cF_{t-})  x)\delta_{\phi(t)}(dx)\Big)(1 - i(t)) \lambda(t)dt
	\\
	&=\Big(\int_\bR \1_{A \setminus \set{0}}\Big((1 - i(t)) \frac{\bE( \kappa_t | \cF_{t-})}{\phi(t)}  x\Big)\delta_{\phi(t)}(dx)\Big)(1 - i(t)) \lambda(t)dt
	\\
	&= \Big(\int_\bR \1_{A \setminus \set{0}}(x)\delta_{ \bE( \kappa_t | \cF_{t-}) }(dx)\Big)\lambda(t)dt
	,\ t\geq 0,
	\end{align*}	
	and finally the first $\bF$-characteristic is given by
	\begin{equation*}%\label{eq:BF-lemma}
	B^\bF_t = \int_0^t \left({^{o,\bF}}(b_s +\1_{|\kappa_s| > 1}\kappa_s \lambda(s))  -   \1_{|\bE( \kappa_s | \cF_{s-})| > 1} \bE( \kappa_s | \cF_{s-}) \lambda(s) \right )ds, \ t\geq 0.
	\end{equation*}

	\noindent b)
	Now, we take $\bF = \bF^M\subseteq \bG$ and  $Z=M$.	 We additionally assume that
	\begin{equation}\label{eq:intc5}
	\bE \Big( \sup_{s \leq t}  |\kappa_s|
	\Big)
	< \infty,\ t\geq 0.
	\end{equation} 	Then, proceeding in a way analogous to what is done in a) above,  we compute
	\[
	d \langle \wh{M}^\bG, Z \rangle_t  = d \Big\langle \int_0^\cdot \gamma_s d W_s + \int_0^\cdot \kappa_s d M_s, M \Big\rangle_t\]
	\[=
	\gamma_t  d \langle W, M \rangle_t + \kappa_s d \langle M \rangle_t =  \kappa_t d \langle M \rangle_t=  \kappa_t d \langle Z \rangle_t,\ t\geq 0,
	\]
	so that
	\begin{equation*}
	H_t= \frac{d \langle \wh{M}^\bG, Z \rangle_t }{ d \langle Z \rangle_t } = \kappa_t \quad dt \otimes d \bP\ a.e.\
	\end{equation*}
	Hence, assumption \eqref{eq:intc5} implies that the predictable projection of the process ${H}$ exist and we conclude that B4 holds.
	Then,  from Theorem \ref{not-adapted} and the fact that $Z^c = M^c = 0$ we obtain that $\bF$-characteristics of $^{o,\bF}\!X$ are given by 	\[
	B^\bF_t = \int_0^t \big( \ ^{o,\bF} ( \beta_s + \kappa_s\1_{|\kappa_s| > 1 }\lambda(s) )  -  \1_{|\bE( \kappa_s | \cF_{s-})| > 1} \bE( \kappa_s | \cF_{s-}) \lambda(s) \big) ds,  \quad C^{\bF}_{t} = 0,
	\]
	\[
	\nu^{\bF} (dx,dt )  = \delta_{ \bE(\kappa_t| \cF_{t-}) }(dx)  \1_{\set{ \bE(\kappa_t| \cF_{-t}) \neq 0}} \lambda(t) dt, \ t\geq 0.
	\]
	
	\noindent c) Here, we take $\bF = \bF^W\subseteq \bG$ and  $Z=W$. We additionally assume that
	\begin{equation}\label{eq:intc6}
	\bE \Big( \sup_{s \leq t} |\gamma_s|
	\Big)
	< \infty,\ t\geq 0.
	\end{equation}	
	
	We have
	\[
	d \langle \wh{M}^\bG, Z \rangle_t  = d \Big\langle \int_0^\cdot \gamma_s d W_s + \int_0^\cdot \kappa_s d M_s, W \Big\rangle_t\]
	\[=
	\gamma_t d \langle W \rangle_t + \kappa_t d \langle M,W \rangle_t = \gamma_t d \langle W \rangle_t, \ t\geq 0.
	\]
	Thus
	\begin{equation*}
	\frac{d \langle \wh{M}^\bG, Z \rangle_t }{ d \langle Z \rangle_t } = \gamma_t \quad dt \otimes d \bP\ a.e.\
	\end{equation*}
	Hence, assumption \eqref{eq:intc6} implies that the predictable projection of the process ${H}$ exist and thus assumption B4 holds.
	Then, applying Theorem \ref{not-adapted}, we conclude  that $\bF$-characteristics of $^{o,\bF}\!X$ are given by
	\[
	B^\bF_t = \int_0^t \ ^{o,\bF} ( \beta_s + \kappa_s\1_{|\kappa_s| > 1 } \lambda(s)) ds,  \quad C^{\bF}_{t} = \int_0^t (\bE( \gamma_s | \cF_{s-}) )^2 ds,\ t\geq 0,\  \quad \nu^{\bF} \equiv 0,
	\]
	where the third equality follows from $ \nu^{Z,\bF}  = \nu^{W,\bF} \equiv 0$.
	
	\noindent d) Let us take $\bF = \bF^W\subseteq \bG$ and  $Z=W$. Moreover, take $\lambda(s) = \lambda>0$, $\beta_s=\lambda$, $\kappa_s=1$ and $\gamma_s=0$ for $s\geq 0$, and $X_0=0$.  Thus $X = N$ is a Poisson process with intensity $\lambda$.
	Clearly $X$ is a special $\gg$-semimartingale with $\bG$-characteristics $(B^\bG,C^\bG,\nu^\bG)$, where
	\[
	B^{\bG}_t = \lambda t , \quad C^\bG_t=0,  \quad \nu^\bG (dx,dt) = \lambda \delta_1(dx) dt,
	\]
	Applying  Theorem \ref{not-adapted}  we see that $\bF$-characteristics of $^{o,\bF}\!X$ are given by
	\[
	B^\bF_t = \int_0^t \ ^{o,\bF} \lambda ds = \lambda t,  \quad C^{\bF}_{t} = 0, \quad \nu^{\bF} \equiv 0.
	\]	
	So a purely discontinuous special semimartingale  $X$ admits continuous optional projection $^{o,\bF}\!X$ .
	\qed
\end{example}

We will now present an  example where $X$ is a continuous special $\bG$--semimartingale, and $^{o,\bF}X$ is a purely discontinuous special $\bF$--semimartingale.

\begin{example} \label{ex:5.7}
	Consider a standard Brownian motion $W$. Let $X=W$ and take $\bG=\bF^X$. The $\bG$-characteristics triple of $W$ is $(0,C^\bG,0)$,  where $C^\bG_t=t$. In particular, we have $b^\bG=0$ and $a^\bG=1$.
	Next, define the filtration $\bF $ as
	\[
	\cF_t=\cF^X_n, \quad t\in [n,n+1),\ n=0,1,2,\ldots
	\]
	The optional projection of $X$ on $\bF$ exists and is given as
	\[
	^{o,\bF}X_t=X_n, \quad t\in [n,n+1),\ n=0,1,2,\ldots
	\]
	In order to compute the  $\bF$-characteristics of $^{o,\bF}\!X$ we first observe that
	the canonical semimartingale representation of $^{o,\bF}X$, with respect to the standard truncation function, is given as
	\begin{equation}\label{eq:oFX}
	^{o,\bF}X=x \ast \mu=(x\1_{|x|\leq 1}) \ast \nu+(x\1_{|x|\leq 1}) \ast (\mu-\nu)+(x\1_{|x|> 1}) \ast \mu,
	\end{equation}
	where
	\[\mu(dt,dx)=\sum_{n\geq 1}\delta_{(n,X_n-X_{n-1})}(dt,dx),\]
	and
	\[
	\nu(\omega,dt,dx)=\sum_{n\geq 1}\, \delta_{n}(dt) \frac{1}{\sqrt{2\pi}}e^{-\frac{x^2}{2}}dx.
	\]
	From \eqref{eq:oFX} we obtain
	\begin{equation*}%\label{BF-not-adapted}
	B^\bF_t=\int_0^t\int_{ |x|\leq 1}x\nu(ds,dx)
	=
	\int_{(0,t] \times [-1,1]}   x \sum_{ n \geq 1} \delta_n(ds) \left(\frac{1}{\sqrt{2\pi}}e^{-\frac{x^2}{2}}dx \right)
	\end{equation*}
	\[
	=\int_{(0,t] }   \sum_{ n \geq 1}  \Big(\int_{-1}^1 x \frac{1}{\sqrt{2\pi}}e^{-\frac{x^2}{2}}dx \Big) \delta_n(ds)
	=0.
	\]
	
	Thus, the  $\bF$-characteristics triple of $^{o,\bF}X$ is $(0,0,\nu^\bF)$, where
	\[
	\nu^\bF=\nu.
	\]
	\qed
\end{example}

\begin{example}\label{ex:MonMod}
	Let us consider the case where $\bF$ is a Brownian filtration, $\bG$ its progressive enlargement with a strictly positive random time $\tau$. Taking $X_t=\ind_{\{\tau \leq t\}}$, $t\geq 0$ we have (cf. \cite{AksJea2017}),
	\[
	\bG=\bF \triangledown \bF^X,
	\]
	where $\bF \triangledown \bF^X$ is the smallest right-continuous filtration which contains $\bF$ and $\bF^X$.
	Now, we define the Az\'ema supermartingale $A$ by
	\[A_t=\bP(\tau>t \vert \F_t),\quad t\geq 0,\]
	and we write its Doob-Meyer decomposition as
	\[A_t=m_t - b_t,\quad t\geq 0,\]
	where $m$ is an $\bF$-martingale, and $b$ is an $\bF$-predictable, increasing process which is  the $\bF$-dual predictable projection  of $X$.
	We assume that  $\tau$ satisfies the following Jacod absolute continuity assumption
	\begin{equation}\label{jacod}\bP(\tau >s \vert \F_t)=\int_s^\infty \alpha _t(u)du,\quad s,t\geq 0,\end{equation}
	where, for any $u\geq 0$, the process $\alpha_\cdot(u)$ is a positive continuous $\bF$-martingale and the map  $(\omega, t,u) \rightarrow \alpha_t(\omega;u)$ is $\tilde{\cP}_\bF$-measurable.
	Using the fact that $\int_0^\infty \alpha _t(u)du=\bP(\tau >0)=1$ and $\alpha_\cdot(u)$ is a martingale, it is shown in Proposition 4.1  in \cite{ElkJeaJia2010} that
	\[ db_t   = \alpha _t (t)dt\]
	and
	\begin{equation}\label{m}m_t = \E\Big(\int_0^\infty \alpha _u(u)du\vert \F_t \Big)= 1+ \int_0^t \alpha _u(u)du-\int _0^t \alpha_t(u)du.\end{equation}
	Note that in the above set-up, the  process $A$ is continuous.
	
	The process $X$ is a special $\bG$-semimartingale and we know (cf. Corollary 5.27 in \cite{AksJea2017}) that its canonical decomposition is given as
	\[X= M^\bG+B^\bG,\]
	and its $\bG$-characteristics are $(B^\bG,0,\nu^\bG)$, where
	\[B^\bG_t=\int_0^t {(1 - X_{s})} \frac{db_s}{A_{s}}=\int_0^t \frac{{ (1 - X_{s})} \alpha_s(s)}{A_{s}}ds,\quad t\geq 0\]
	and
	\[\nu^\bG(dt,dx)=\delta_{1}(dx) \frac{{(1 - X_{t-})} \alpha_t(t)}{A_{t}}dt.\]
	In particular, note that here we have $b^\bG_t=\frac{{ (1 - X_{t})} \alpha_t(t)}{A_{t}}$ and $a^\bG_t=1$.
	
	Now, using Lemma \ref{lem:heroic-result} and observing that $^{o,\bF}X=1-A$  we can easily compute the first $\bF$-characteristic of $^{o,\bF}X$,
	\[B^\bF_t=\int_0^t {{^{o,\bF}}\left (\frac{{(1-X_{s})} \alpha_s(s)}{A_{s}}\right )}ds=\int_0^t \alpha_s(s)ds.\]
	Next, recalling that $A$ is a continuous process  we  conclude that $\nu^\bF=0$. Moreover, we see that $C^\bF = \langle m \rangle$.
	This completes the computation of the $\bF$-characteristics of $^{o,\bF}X$ which are $(B^\bF, \langle m \rangle, 0)$. \qed
\end{example}

The next example is, in a sense, opposite to Example \ref{ex:5.7}: here, $X$ is a purely discontinuous special $\bG$--semimartingale, and  $^{o,\bF}X$ is a continuous special $\bF$--semimartingale. Thus, this example complements Example \ref{ex:Privault} d).

\begin{example}
	Let $\bF$ be a Brownian filtration and $\bG$ its progressive enlargement with a strictly positive random time $\tau \in \F_\infty$ satisfying Jacod's absolute continuity assumption \eqref{jacod} with some density $\alpha _t(u),\ t,u\geq 0$.
	Such a random time can be defined as $\tau:= \psi (\int_0^\infty f(t)dW_t)$, where $\psi$ is a differentiable, positive and strictly increasing function, and $W$ is a real valued standard $\bF$-Brownian motion (see  \cite{ElkJeaJiaZar2014}).
	Let $\widehat X$ be the compensated martingale
	\[
	\widehat X_t=\ind_{\{\tau\leq t\}}-\int_0^{t\wedge \tau} \frac{\alpha_s(s)}{A_s}ds, \quad t\geq 0.
	\]
	We see that its $\bG$-characteristic triple is $(0,0,\nu^\bG)$ where, as in the previous example,
	\[\nu^\bG(dt,dx)=\delta_{1}(dx) \frac{\ind_{\{t<\tau\}} \alpha_t(t)}{A_{t}}dt.\]
	The $\bF$-optional projection of $\widehat X$, say $\upsilon$, is a continuous martingale, which is not constant. Indeed, note that if $\upsilon$ were constant then $\upsilon_\infty=\upsilon_0=0$. Given that, one has $\widehat X_ \infty=1-\int_0^{ \tau} \frac{\alpha_s(s)}{A_s}ds\in \F_\infty$ and $\upsilon_\infty=\widehat X_ \infty$. But since $\upsilon_\infty=0$, then $\widehat X_\infty=0$, and $\widehat X$ being a martingale would be null, which it is not. This is a contradiction, showing that $\upsilon$ is not constant. Consequently, its $\bF$ characteristic triple is $(0,C^\bF,0)$, with $C^\bF\ne 0$. \qed
\end{example}

\section{Conclusion and open problems for future research}\label{sec:conclusion}

As stated in the Introduction this paper is meant to initiate a systematic study of the change of properties of semimartingales  under shrinkage of filtrations and, when appropriate,  under respective projections.

Given its pioneering nature the study originated here leads to numerous open problems and calls for extensions in numerous directions.  Below, we indicate some such open problems and suggestions for continuation of the research presented in this paper.

The results presented in this paper use several non-trivial assumptions. A natural direction for continuation of the present work will be to try to eliminate some of these assumptions.

Recall the decomposition \eqref{eq:important-decompostion}
\begin{align*}\nonumber
^{o,\bF} X_t &= \ ^{o,\bF}\!X_0 + ^{o,\bF}\!\wh{M}^\bG_t+^{o,\bF}\! \wh{B}^\bG_t
\\ \label{eq:important-decompostion}
&=  X_0 + ^{o,\bF}\!\wh{M}^\bG_t+^{o,\bF}\! \wh{B}^\bG_t  - \int_0^t\,  {^{o,\bF}}\! (\wh{b}^\bG a^\bG)_udu +  \int_0^t\,  {^{o,\bF}} (\wh{b}^\bG a^\bG)_udu.
\end{align*}
As it was shown in the proof of Theorem \ref{not-adapted}, if the immersion hypothesis B2 is postulated, then the martingale  $\wh{M}^B_t={^{o,\bF} \wh{B}^\bG_t}- \int_0^t\,  {^{o,\bF}} (\wh{b}^\bG a^\bG)_udu$ is null.
Therefore it does not intervene in the representation of the $\bF$-characteristics of $^{o,\bF}\!X$. If however the martingale $\wh{M}^B$ is not null, then the computation of the $\bF$-characteristics of $^{o,\bF}\!X$ in terms of the $\bG$-characteristics of $X$ is much more challenging, and perhaps may not be doable.

The immersion hypothesis B2 postulated Theorem \ref{not-adapted} is also heavily exploited in computation of the second $\bF$-characteristic of $X$, that is in computation of $C^\bF$. In fact, computation  of $C^\bF$ in terms of $\bG$-canonical decomposition appears to be much more difficult, or even impossible, without the hypothesis B2, as the following reasoning shows: Assume that $\bF$ is a Brownian filtration generated by $W$, so that $W$ enjoys the predictable representation property in $\ff$. Also, take   $\bG$  to be the progressive enlargement of $\bF$ by a random time $\tau$.\footnote{See e.g. Chapter 5 in \cite{AksJea2017} for the concept of the progressive enlargement of filtrations.} Assume that there exists  a $\bG$-predictable integrable process $\mu$ such that $W^\bG$ defined  for any $t\in \r_+$   as  $$W^\bG_t = W_t+\int_0^t \mu_sds$$ is a $\bG$-martingale (hence, a $\bG$-Brownian motion).
Then, any  $\bG$-martingale $X$ can be written as
\[X_t =X_0+\int_0^t \psi_sdW^\bG_s+ M ^ \bot_t,\quad t\geq 0,\]
where $\psi$ is a $\bG$-predictable process and $M^\bot$ a $\bG$-martingale orthogonal to $W^\bG$ (in fact, it is a purely discontinuous martingale).  Moreover,  one can show  (using the same methodology as in  \cite{gapeev:hal-02120343}) that $^{o,\bF}X$, which is an $\ff$-martingale, has the form
\[^{o,\bF}X_t= {^{o,\bF}X_0}+\int_0^t \gamma_sdW_s,\quad t\geq 0,\]
where $\gamma$ satisfies $\gamma_t =\bE( \psi_t +\mu_t X_t \vert \F_t)$. So here we have that
\[C^\bF_t=\int_0^t (\bE( \psi_s +\mu_s X_s \vert \F_s))^2ds,\]
\[C^\bG_t=\int_0^t \psi^2_s ds.\]
Clearly, $C^\bG$ alone does not suffice to compute $C^\bF$, unless $\mu \equiv 0$ -- i.e., $\bF$ is immersed in $\bG$.   In fact, it is not clear at all, how to compute the $C^\bF$ characteristic of $X$ in terms of the canonical decomposition and $\bG$ characteristics of $X$.

The discussion above points to an important open problem: extend, if possible, the results of Theorem \ref{not-adapted} to the case when the immersion hypothesis B2 is abandoned, and extend the result of \cite{gapeev:hal-02120343} to the case of general continuous semi-martingales.

Another challenging problem for future research is weakening of the predictable representation property condition B1, and replacing it with the postulate of the weak predictable representation property condition for $Z$, that is with the postulate that
every local $\ff$--martingale $Y$ admits the representation

\[Y_t=Y_0+\psi\cdot Z^c_t + \xi \ast \widetilde \mu^Z_t,\quad t\geq 0,\]
where $\psi$ is an $\ff$-predictable process, $\xi$ is a $\tilde{\cP}_\bF$-measurable function, $Z^c$ is the continuous martingale part of $Z$, and $\widetilde \mu^Z$ is the $\bF$--compensated  measure of jumps of $Z$.

%Finally, one would like to explicitly compute $h$ showing in \eqref{eq:h} in terms of $C^\bG$ and $B^\bG$. A possible starting point for this may be the following ``master equation'' $$C^\bG_t= \int_0^t H^2_s d \langle Z^c\rangle _s+ \langle M^{\bot,c}\rangle _t=\int_0^t c^\bG _s a^\bG_s ds$$ From this equation on would try to compute $H$ in terms of $c^\bG$ and $a^\bG$ and then use \eqref{eq:h} to compute $h$. In special cases, such as the one presented in Example \ref{ex:Privault}, the quasi-explicit computation of $h$ can be done. But, in general, this remains a challenging open problem.

Finally, it might be worthwhile to study the following interesting question: %Change of the filtration does not change the law of $X$.
Suppose we have two different semimartingales $X$ and $Y$ with different laws but with the same characteristics in $\bG$. Will their characteristics in $\bF$ be the same as well?

\bibliographystyle{alpha}
\bibliography{Math_Fin}

\end{document}